\newcommand{\Xcomment}[1]{}
\newtheorem{theorem}{Theorem}[section]
\newtheorem{lemma}[theorem]{Lemma}
\newtheorem{prop}[theorem]{Proposition}
\makeatletter \@addtoreset{equation}{section} \makeatother
\newenvironment{proof}{\noindent{\bf Proof}\/}%
{\hfill$\qed$\medskip}
\def\qed{ \ \vrule width.1cm height.3cm depth0cm}
\newenvironment{numitem1}{\refstepcounter{equation}\begin{enumerate}%
\item[(\thesection.\arabic{equation})]}{\end{enumerate}}
\newcommand{\refeq}[1]{(\ref{eq:#1})}  
\def\rest#1{_{\,\vrule height 1.6ex width 0.05em depth 0pt\, #1}}
\renewcommand{\section}{\@startsection{section}{1}{0pt}%
{-3.5ex plus -1ex minus -.2ex}{2.3ex plus .2ex}%
{\normalfont\Large}}
\renewcommand{\subsection}{\@startsection{subsection}{2}{0pt}%
{-3.0ex plus -1ex minus -.2ex}{1.5ex plus .2ex}%
{\normalfont\normalsize\bf}}
\def\Rset{{\mathbb R}}
\def\Nset{{\mathbb N}}
\def\Zset{{\mathbb Z}}
\def\Ascr{{\cal A}}
\def\Bscr{{\cal B}}
\def\Cscr{{\cal C}}
\def\Dscr{{\cal D}}
\def\Fscr{{\cal F}}
\def\Iscr{{\cal I}}
\def\Kscr{{\cal K}}
\def\Mscr{{\cal M}}
\def\Pscr{{\cal P}}
\def\Qscr{{\cal Q}}
\def\frakL{{\mathfrak L}}
\def\frakS{{\mathfrak S}}
\def\tilde{\widetilde}
\def\hat{\widehat}
\def\bar{\overline}
\def\Bleft{B^{\rm left}}
\def\Bright{B^{\rm right}}
\def\deltain{\delta^{\rm in}}
\def\deltaout{\delta^{\rm out}}
\def\odivide{/}
\begin{document}

 \begin{center}
{\large\bf Planar flows and Pl\"ucker's type quadratic relations \\ over
semirings}
 \end{center}

 \begin{center}
{\sc Vladimir~I.~Danilov}\footnote[1] {Central Institute of Economics and
Mathematics of the RAS, 47, Nakhimovskii Prospect, 117418 Moscow, Russia;
emails: danilov@cemi.rssi.ru (V.I.~Danilov); koshevoy@cemi.rssi.ru
(G.A.~Koshevoy).},
{\sc Alexander~V.~Karzanov}\footnote[2]{Institute for System Analysis of the
RAS, 9, Prospect 60 Let Oktyabrya, 117312 Moscow, Russia; email:
sasha@cs.isa.ru.},
{\sc Gleb~A.~Koshevoy}$^1$
\end{center}

 \begin{center}
 17.08.2010
 \end{center}


 \begin{quote}
 {\bf Abstract.} \small
It is well known, due to Lindstr\"om, that the minors of a (real or complex)
matrix can be expressed in terms of weights of flows in a planar directed
graph. Another classical fact is that there are plenty of homogeneous quadratic
relations involving flag minors, or Pl\"ucker coordinates of the corresponding
flag manifold. Generalizing and unifying these facts and their tropical
counterparts, we consider a wide class of functions on $2^{[n]}$ that are
generated by flows in a planar graph and take values in an arbitrary
commutative semiring, where $[n]=\{1,2,\ldots,n\}$. We show that the
``universal'' homogeneous quadratic relations fulfilled by such functions can
be described in terms of certain matchings, and as a consequence, give
combinatorial necessary and sufficient conditions on the collections of subsets
of $[n]$ determining these relations.

 \medskip
{\em Keywords}\,: Pl\"ucker relations, semiring, Laurent phenomenon, planar
graph, network flow

\medskip
{\em AMS Subject Classification}\, 05C75, 05E99
  \end{quote}

\parskip=3pt


\section{\Large Introduction}  \label{sec:intr}

For a positive integer $n$, let $[n]$ denote the set of integers
$1,2,\ldots,n$.

In this paper we consider functions on the set $2^{[n]}$ of subsets of $[n]$
(or the $n$-dimensional Boolean cube) that take values in a commutative
semiring and are generated by planar flows. Functions of this sort satisfy
plenty of quadratic relations of Pl\"ucker's type, and our goal is to describe
a combinatorial method that enables us to reveal and easily prove such
relations.

We start with recalling some basic facts concerning Pl\"ucker algebra and
Pl\"ucker coordinates. Consider the $n\times n$ matrix $\bf x$ of
indeterminates $x_{ij}$ and its associated polynomial ring $\Zset[\bf x]$. Also
consider the polynomial ring $\Zset[\Delta]$ associated to the set of $2^n$
variables $\Delta_S$ indexed by the subsets $S\subseteq [n]$. They are linked
by the natural ring homomorphism $\psi:\Zset[\Delta]\to\Zset[\bf x]$ that
brings each variable $\Delta_S$ to the flag minor polynomial for $S$, i.e. to
the determinant of the submatrix $\bf{x}_S$ formed by the column set $S$ and
the row set $\{1,\ldots,|S|\}$ of $\bf x$. An important fact is that the ideal
${\rm ker}(\psi)$ of $\Zset[\Delta]$ is generated by some homogeneous quadrics,
each being an integer combination of products $\Delta_S\Delta_{S'}$ with the
same parameter $(|S|,|S'|)$. They correspond to quadratic relations on the
Pl\"ucker coordinates of a (real say) invertible $n\times n$ matrix (viz. on
the Pl\"ucker coordinates of a point of the flag manifold over $\Rset^n$
embedded in the appropriate projective space); for a survey see,
e.g.,~\cite[Ch.~14]{MS}.

There are plenty of quadratic Pl\"ucker relations on flag minors of a matrix
whose entries are assumed to belong to an arbitrary commutative ring
$\mathfrak{R}$ (of which the case $\mathfrak{R}=\Rset$ or $\mathbb{C}$ is most
popular). Let $f(S)$ denote the flag minor with a column set $S$ in this
matrix.

A well-known (and the simplest) special case of Pl\"ucker relations involves
triples: for any three elements $i<j<k$ in $[n]$ and any subset
$X\subseteq[n]-\{i,j,k\}$, the flag minor function $f:2^{[n]}\to \mathfrak{R}$
of an $n\times n$ matrix satisfies
   \begin{equation} \label{eq:AP3}
    f(Xik)f(Xj)-f(Xij)f(Xk)-f(Xjk)f(Xi)=0,
    \end{equation}
where for brevity we write $Xi'\ldots j'$ for $X\cup\{i',\ldots,j'\}$. We refer
to~\refeq{AP3} as the \emph{AP3-relation} (abbreviating ``algebraic Pl\"ucker
relation with triples").  Another well-known special case (in particular,
encountered in a characterization of Grassmannians) involves quadruples
$i<j<k<\ell$ and is viewed as
   \begin{equation} \label{eq:AP4}
    f(Xik)f(Xj\ell)-f(Xij)f(Xk\ell)-f(Xi\ell)f(Xjk)=0.
    \end{equation}

A general (algebraic quadratic) Pl\"ucker relation on flag minors of a matrix
can be written in the form
  \begin{equation} \label{eq:a_pluck}
  \sum_{A\in\Ascr} f(X\cup \gamma_Y(A))f(X\cup\gamma_Y(\bar A))-
     \sum_{A'\in\Ascr'} f(X\cup \gamma_Y(A'))f(X\cup\gamma_Y(\bar A'))=0.
  \end{equation}
Here: (a) $\Ascr$ and $\Ascr'$ are certain collections of $p$-element subsets
in $[p+q]$ for some integers $p,q>0$ with $p+q\le n$; (b)~$Y$ is a
$(p+q)$-element subset in $[n]$, and $\gamma=\gamma_Y$ is the order preserving
bijective map $[p+q]\to Y$ (i.e. $\gamma(i)<\gamma(j)$ for $i<j$); (c)~$X$ is
an arbitrary subset of $[n]-Y$; and (d) $\bar A$ stands for the complement
$[p+q]-A$ of $A\subseteq [p+q]$. Emphasize that~\refeq{a_pluck} should be valid
for the flag minor function $f$ of any $n\times n$ matrix (over any
$\mathfrak{R}$) and depends only on $p,q,\Ascr,\Ascr'$ but not $X,Y$. Note also
that each of $\Ascr,\Ascr'$ is admitted to be a collection in which multiple
sets $A\subseteq[p+q]$ are allowed (sometimes called a \emph{multicollection});
in spite of this, to simplify notation we will write $\Ascr,\Ascr' \subseteq
\binom{[p+q]}{p}$.

In particular,~\refeq{a_pluck} turns into~\refeq{AP3} when $p=2$, $q=1$,
$\Ascr=\{13\}$, $\Ascr'=\{12,23\}$ and $Y=\{i,j,k\}$, and turns
into~\refeq{AP4} when $p=q=2$, $\Ascr=\{13\}$, $\Ascr'=\{12,14\}$ and
$Y=\{i,j,k,\ell\}$.

An important property shown by Lindstr\"om~\cite{Li} is that the minors of a
matrix can be expressed by use of flows in a planar graph. A flow model will be
the focus of our further description, and we now specify the notion of planar
flows that we deal with. (See also~\cite{RS} for further applications of the
flow model.)

By a \emph{planar network} we mean a finite directed planar graph $G=(V,E)$
(properly embedded in the plane) in which two $n$-element subsets
$S=\{s_1,\ldots,s_n\}$ and $T=\{t_1,\ldots,t_n\}$ of vertices are
distinguished, called the sets of \emph{sources} and \emph{sinks} in $G$,
respectively. We throughout assume that (a) $G$ is (weakly) connected, that (b)
the sources and sinks belong to the boundary (of the outer face) of $G$ and
occur in it in the cyclic order $s_n,\ldots,s_1,t_1,\ldots,t_n$ (with possibly
$s_1=t_1$ or $s_n=t_n$), and that (c) $G$ is \emph{acyclic}, i.e. contains no
directed cycle. We will attribute the term ``network'' to the graph $G$ itself
when the sets of sources and sinks in it are clear from the context. An
important particular case is the \emph{half-grid} $\Gamma_n$ whose vertices are
the integer points $(i,j)\in\Rset^2$ with $1\le j\le i\le n$, the edges are all
possible ordered pairs of the form $((i,j),(i-1,j))$ or $((i,j),(i,j+1))$, the
sources are $s_i=(i,1)$ and the sinks are $t_i=(i,i)$, $i=1,\ldots,n$. The
half-grid $\Gamma_{5}$ is illustrated in Fig.~\ref{fig:Gamma}.

\begin{figure}[htb]
 \begin{center}
 \unitlength=.8mm 
\linethickness{0.4pt}
\begin{picture}(81.00,50.00)(20,5)
\put(40.00,5.00){\circle{2}} \put(50.00,5.00){\circle{2}}
\put(60.00,5.00){\circle{2}} \put(70.00,5.00){\circle{2}}
\put(80.00,5.00){\circle{2}} \put(50.00,15.00){\circle{2}}
\put(60.00,15.00){\circle{2}} \put(70.00,15.00){\circle{2}}
\put(80.00,15.00){\circle{2}} \put(60.00,25.00){\circle{2}}
\put(70.00,25.00){\circle{2}} \put(80.00,25.00){\circle{2}}
\put(70.00,35.00){\circle{2}} \put(80.00,35.00){\circle{2}}
\put(80.00,45.00){\circle{2}} \put(79.00,5.00){\vector(-1,0){8.00}}
\put(69.00,5.00){\vector(-1,0){8.00}} \put(59.00,5.00){\vector(-1,0){8.00}}
\put(49.00,5.00){\vector(-1,0){8.00}} \put(79.00,15.00){\vector(-1,0){8.00}}
\put(69.00,15.00){\vector(-1,0){8.00}} \put(59.00,15.00){\vector(-1,0){8.00}}
\put(79.00,25.00){\vector(-1,0){8.00}} \put(69.00,25.00){\vector(-1,0){8.00}}
\put(79.00,35.00){\vector(-1,0){8.00}} \put(80.00,6.00){\vector(0,1){8.00}}
\put(80.00,16.00){\vector(0,1){8.00}} \put(80.00,26.00){\vector(0,1){8.00}}
\put(80.00,36.00){\vector(0,1){8.00}} \put(70.00,6.00){\vector(0,1){8.00}}
\put(70.00,16.00){\vector(0,1){8.00}} \put(70.00,26.00){\vector(0,1){8.00}}
\put(60.00,6.00){\vector(0,1){8.00}} \put(60.00,16.00){\vector(0,1){8.00}}
\put(50.00,6.00){\vector(0,1){8.00}} \put(40.00,1.00){\makebox(0,0)[cc]{$s_1$}}
\put(50.00,1.00){\makebox(0,0)[cc]{$s_2$}}
\put(60.00,1.00){\makebox(0,0)[cc]{$s_3$}}
\put(70.00,1.00){\makebox(0,0)[cc]{$s_4$}}
\put(80.00,1.00){\makebox(0,0)[cc]{$s_5$}}
\put(36.00,7.00){\makebox(0,0)[cc]{$t_1$}}
\put(47.00,17.00){\makebox(0,0)[cc]{$t_2$}}
\put(57.00,27.00){\makebox(0,0)[cc]{$t_3$}}
\put(67.00,37.00){\makebox(0,0)[cc]{$t_4$}}
\put(76.50,46.00){\makebox(0,0)[cc]{$t_5$}}
\end{picture}
  \end{center}
\caption{The half-grid $\Gamma_5$} \label{fig:Gamma}
  \end{figure}
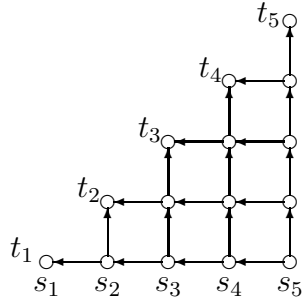

For $I\subseteq[n]$, define $S_I:=\{s_i\colon i\in I\}$ and $T_I:=\{t_i\colon
i\in I\}$. Speaking of an $(I,J)$-\emph{flow}, where $I,J\subseteq[n]$ and
$|I|=|J|=:k$, we mean a collection $\phi$ of $k$ pairwise (vertex) disjoint
directed paths in $G$ going from the source set $S_I$ to the sink set $T_J$.
When $\phi$ enters the first $k$ sinks (i.e. $J=[k]$), we refer to $\phi$ as a
\emph{flag flow} for $I$, or an $I$-\emph{flow}. The set of $I$-flows (resp.
$(I,J)$-flows) in $G$ is denoted by $\Phi_I=\Phi_I^G$ (resp.
$\Phi_{I,J}=\Phi_{I,J}^G$).

Let $w:V\to\mathfrak{R}$ be a weighting on the vertices of $G$, where, as
before, $\mathfrak{R}$ is a commutative ring. We associate to $w$ the function
$f=f_w$ on $2^{[n]}$ defined by
  \begin{equation} \label{eq:alg_f}
  f(I):=\sum\nolimits_{\phi\in\Phi_I}\prod\nolimits_{v\in V_\phi} w(v),
  \qquad I\subseteq [n],
  \end{equation}
where $V_\phi$ is the set of vertices occurring in a flow $\phi$. (It is
possible that $G$ has no flag flow for some $I$, in which case $f(I)$ becomes
0.) We refer to $f$ obtained in this way as an \emph{algebraic flow-generated
function}, or an \emph{AFG-function} for short. By Lindstr\"om
theorem~\cite{Li}, if $M$ is the $n\times n$ matrix whose entries $m_{ji}$ are
defined as $\sum_{\phi\in\Phi_{\{i\},\{j\}}}\prod_{v\in V_\phi} w(v)$, then for
any $I,J\subseteq [n]$ with $|I|=|J|$, the minor of $M$ with the column set $I$
and the row set $J$ is equal to $f(I,J)$, where the latter is defined as in
expression~\refeq{alg_f} with $\Phi_I$ replaced by $\Phi_{I,J}$. A converse
property takes place as well (at least for $\mathfrak{R}=\Rset$ or
$\mathbb{C}$): the minors of any $n\times n$ matrix can be expressed as above
via flows for some planar network and weighting.

Another important application of the flow model concerns tropical analogues of
the above quadratic relations. In this case the flow-generated function $f=f_w$
on $2^{[n]}$ determined by a weighting $w$ on $V$ is defined as
   \begin{equation} \label{eq:trop_f}
  f(I):=\max_{\phi\in\Phi_I}\left(\sum\nolimits_{v\in V_\phi} w(v)\right),
  \qquad I\subseteq [n].
  \end{equation}
Here $w$ is assumed to take values in a totally ordered abelian group
$\mathfrak{L}$ (usually one deals with $\mathfrak{L}=\Rset$ or $\Zset$). The
formula for $f$ in~\refeq{trop_f} is nothing else than the tropicalization of
that in~\refeq{alg_f}, and $f$ is said to be a \emph{tropical flow-generated
function}, or a \emph{TFG-function}. Some appealing properties of such
functions and related objects are demonstrated in~\cite{DKK1}. (See
also~\cite{DKK2} for additional results. Note that~\cite{DKK1,DKK2} deal with
real-valued tropical functions but everywhere $\Rset$ can be replaced by
$\mathfrak{L}$.) In particular, one shows that a TFG-function $f$ satisfies the
tropical analog of~\refeq{AP3}, or the \emph{TP3-relation}:
   \begin{equation} \label{eq:TP3}
    f(Xik)+f(Xj)=\max\{f(Xij)+f(Xk),f(Xjk)+f(Xi)\},
    \end{equation}
where, as before, $i<j<k$ and $X\subseteq[n]-\{i,j,k\}$. It turns out that a
converse property holds as well: any function $f:2^{[n]}\to\mathfrak{L}$
obeying the TP3-relation (for all $i,j,k,X$) is a TFG-function (determined by
some $G$ and $w$). In fact, to generate the set ${\bf T}_n(\mathfrak{L})$ of
all TFG-functions it suffices to consider only one planar network, namely, the
above-mentioned half-grid $\Gamma_n=(V,E)$, which emphasizes an important role
of the latter. More precisely, the correspondence $w\mapsto f_w$, where
$w:V\to\mathfrak{L}$, gives a bijection between $\mathfrak{L}^V$ and ${\bf
T}_n(\mathfrak{L})$. Two more results shown in~\cite{DKK1} by handling flows in
$\Gamma_n$ are:

(i) ${\bf T}_n(\mathfrak{L})$ has as a sort of \emph{basis} the set $\Iscr_n$
of all intervals $[p..q]:=\{p,p+1,\ldots,q\}$ in $[n]$ (including the ``empty
interval'' $\emptyset$). This means that the restriction map $f\mapsto
{f}\rest{\Iscr_n}$ gives a bijection between ${\bf T}_n(\mathfrak{L})$ and
$\mathfrak{L}^{\Iscr_n}$ (i.e. any TFG-function is determined by its values on
the intervals, and those values can be chosen arbitrarily in $\mathfrak{L}$);

(ii) for any subset $X\subseteq[n]$, the value of a TFG-function $f$ on $X$ can
be expressed by a tropical Laurent polynomial in variables $f(I)$,
$I\in\Iscr_n$.

(Note that in general ${\bf T}_n(\mathfrak{L})$ admits many bases as in~(i); an
especial role of the basis $\Iscr_n$ is discussed in~\cite{DKK1} where this
basis is called \emph{standard}. Note also that (ii) gives a tropical analogue
of the Laurentness phenomenon for algebraic flow-generated functions $f$, i.e.
the values of $f$ are Laurent polynomials in the values on intervals, in the
assumption that the latter ones are positive; see~\cite{FZ}.)
\medskip

In this paper we combine both algebraic and tropical cases by considering
functions taking values in an arbitrary \emph{commutative semiring}
$\mathfrak{S}$, a set equipped with two associative and commutative binary
operations $\oplus$ (addition) and $\odot$ (multiplication) satisfying the
distributive law $a\odot(b\oplus c)=(a\odot b)\oplus(a\odot c)$. Sometimes we
assume, in addition, that $\mathfrak{S}$ contains neutral elements $\underline
0$ (for addition) and/or $\underline 1$ (for multiplication). Two special cases
are of especial interest for us. When $\underline 0\in\mathfrak{S}$ and each
element has an additive inverse, $\mathfrak{S}$ becomes a commutative ring as
above. Another case is a commutative semiring with division, i.e. $\underline
1\in\mathfrak{S}$ and each element has a multiplicative inverse. Examples of
the latter are: the set $\Rset_{>0}$ of positive reals (with $\oplus=+$ and
$\odot=\cdot$), and the above-mentioned tropicalization of a totally ordered
abelian group $\mathfrak{L}$, denoted as $\mathfrak{L}^{\rm trop}$ (with
$\oplus=\max$ and $\odot=+$).

Extending~\refeq{alg_f} and~\refeq{trop_f}, we define the flow-generated
function $f=f_w$ determined by a weighting $w:V\to\mathfrak{S}$ as
  \begin{equation} \label{eq:S_f}
  f(I):=\bigoplus\nolimits_{\phi\in\Phi_I} w(\phi),
  \qquad I\subseteq [n],
  \end{equation}
where $w(\phi)$ stands for the weight $\odot(w(v)\colon v\in V_\phi)$ of a flow
$\phi$. We call $f$ an \emph{SFG-function} (abbreviating ``flow-generated
function over a semiring''), and denote the set of these functions by ${\bf
FG}={\bf FG}_n(\mathfrak{S})$. A direct analogue of identity~\refeq{a_pluck}
for $\mathfrak{S}$ is viewed as
  \begin{multline} \label{eq:S_pluck}
  \bigoplus\nolimits_{A\in\Ascr} \left(f(X\cup \gamma_Y(A))\odot
      f(X\cup\gamma_Y(\bar A))\right)     \\
 =\bigoplus\nolimits_{A'\in\Ascr'} \left(f(X\cup \gamma_Y(A'))\odot
   f(X\cup\gamma_Y(\bar A'))\right),
  \end{multline}
and when this holds true for fixed (multi)collections $\Ascr,\Ascr'$ and for
any corresponding $\mathfrak{S},G,w,X,Y$, we say that~\refeq{S_pluck} is a
\emph{stable quadratic relation}, or an \emph{sq-relation}. \medskip

 \noindent\textbf{Remark 1.}
If $G$ and $I$ are such that $\Phi^G_I=\emptyset$, then~\refeq{S_f} is not
applicable in general. In this case $f(I)$ may be regarded as \emph{undefined},
and whenever expression~\refeq{S_pluck} contains a summand $f(I)\odot f(J)$
with at least one of $f(I),f(J)$ being undefined (for the given $G$), we may
think that this summand simply vanishes in the expression. (An alternative way
is to put $f(I)$ to be an ``extra neutral'' element $\ast$ added to
$\mathfrak{S}$, setting $\ast\oplus a=a$ and $\ast\odot a=\ast$ for all $a\in
\mathfrak{S}$.) In particular, $f(\emptyset)$ may be regarded as undefined, and
we will usually ignore the value of an SFG-function on the element $\emptyset$.
\medskip

The goal of this paper is to describe a relatively simple combinatorial method
of constructing pairs $\Ascr,\Ascr'$ determining sq-relations, and we give
necessary and sufficient conditions on such pairs. In fact, our method is
inspired by flow rearranging techniques elaborated in~\cite{DKK1} for proving
the TP3-relation for TFG-functions. The method reduces the task to a
combinatorial problem of smaller size (and provides a polynomial-time algorithm
to recognize whether or not a pair $\Ascr,\Ascr'$ gives an sq-relation). This
is exposed in Theorem~\ref{tm:main} which bridges validity of~\refeq{S_pluck}
for $p,q,\Ascr,\Ascr'$ and the property that two collections of certain
\emph{matchings} associated to $\Ascr,\Ascr'$ are \emph{balanced}; the meaning
of the latter notion will be explained later. It should be noted that our
method of handling flows resembles, to some extent, a technique in~\cite{Ku}
where quadratic relations on the amounts of perfect matchings in certain
subgraphs of a planar graph are established.

The paper is organized as follows. Section~\ref{sec:flow} describes properties
of certain pairs of flows (\emph{double flows}) which lie in the background of
our method. Section~\ref{sec:balan} states the main result
(Theorem~\ref{tm:main}) and proves the sufficiency part in it, claiming that
all balanced collections $\Ascr,\Ascr'$ generate sq-relations.
Section~\ref{sec:relat} is devoted to illustrations of the method, which
demonstrate a number of particular and wider classes of stable
identities~\refeq{S_pluck}. Section~\ref{sec:(i)-(ii)} proves the necessity
part in the main theorem; moreover, we show that if collections $\Ascr,\Ascr'$
are not balanced, then the corresponding quadratic relation does not hold
already for some AFG-function with $\mathfrak{R}=\Rset$. This implies that
\emph{for $\Ascr,\Ascr'$ fixed, validity of~\refeq{S_pluck} for all $\frakS$ is
equivalent to validity of~\refeq{a_pluck} for $\Rset$.} (This responds the
so-called \emph{transfer principle} for semirings; see, e.g.,
\cite[Sec.~3]{AGG}.) The final Section~\ref{sec:laurent} contains a short
discussion on the standard basis and the Laurent phenomenon for SFG-functions
over a commutative semiring with division.


\section{\Large Flows and double flows}  \label{sec:flow}

Let $G=(V,E)$ be a planar network with sources $s_1,\ldots,s_n$ and sinks
$t_1,\ldots,t_n$ arranged as above, and let $p,q\in\Nset$ and $p+q\le n$. As
before, we assume that $G$ is (weakly) connected and acyclic. In this section
we describe ideas and tools behind the method of constructing
(multi)collections $\Ascr,\Ascr'\subseteq\binom{[p+q]}{p}$ that ensure validity
of~\refeq{S_pluck} for all flow-generated functions $f=f_w$ on $2^{[n]}$
determined by weightings $w:V\to \mathfrak{S}$, where $\mathfrak{S}$ is an
arbitrary commutative semiring.

First of all we specify some terminology and notation. By a \emph{path} in a
digraph (directed graph) we mean a sequence $P=(v_0,e_1,v_1,\ldots,e_k,v_k)$
where each $e_i$ is an edge connecting vertices $v_{i-1},v_i$. An edge $e_i$ is
called \emph{forward} if it is directed from $v_{i-1}$ to $v_i$, denoted as
$e_i=(v_{i-1},v_i)$, and \emph{ backward} otherwise (when $e_i=(v_i,v_{i-1})$).
The path $P$ is called {\em directed} if it has only forward edges, and {\em
simple} if all vertices $v_i$ are distinct. When $k>0$, $v_0=v_k$ and all
$v_1,\ldots,v_k$ are distinct, ~$P$ is called a \emph{simple cycle}, or a
\emph{circuit}; it is often considered up to cyclically shifting and reversing.
The sets of vertices and edges of $P$ are denoted by $V_P$ and $E_P$,
respectively.

Recall that by an $I$-flow in $G$, where $I\subseteq[n]$, we mean a collection
$\phi$ of $|I|$ pairwise disjoint directed paths going from the source set
$S_I$ to the sink set $\{t_1,\ldots,t_{|I|}\}$. Since $G$ is acyclic, all these
paths are simple, and the order of sources and sinks in the boundary of $G$
implies that the path in $\phi$ entering a sink $t_i$ begins at $i$-th source
in $S_I$ (in the natural ordering there). A useful equivalent definition of an
$I$-flow $\phi$ is that $\deltaout_\phi(s_i)=1$ and $\deltain_\phi(s_i)=0$ if
$i\in I$; $\deltaout_\phi(t_j)=0$ and $\deltain_\phi(t_j)=1$ if $1\le j\le
|I|$; and $\deltaout_\phi(v)=\deltain_\phi(v)\in\{0,1\}$ for the other vertices
$v$ in $G$. Here $\deltaout_\phi(v)$ (resp. $\deltain_\phi(v)$) denotes the
number of edges in $\phi$ leaving (resp. entering) a vertex $v$. Also we denote
$\deltaout_\phi(v)+\deltain_\phi(v)$ by $\delta_\phi(v)$.

Our approach is based on examining certain pairs of flag flows in $G$ and
rearranging them to form some other pairs. To simplify technical details, it is
convenient to consider an equivalent flow model, obtained by slightly modifying
the network $G$, as follows. Let us split each vertex $v\in V$ into two
vertices $v',v''$ (disposing them in a small neighborhood of $v$ in the plane)
and connect them by edge $e_v=(v',v'')$, called a \emph{split-edge}. Each edge
$(u,v)$ of $G$ is replaced by an edge going from $u''$ to $v'$; we call it an
\emph{ordinary} edge. Also for each $s_i\in S$, we add new source $\hat s_i$
and edge $(\hat s_i,s')$, and for each $t_j\in T$, add new sink $\hat t_j$ and
edge $(t''_j,\hat t_j)$; we refer to such edges as \emph{extra} ones. The
picture illustrates the transformation for $G=\Gamma_3$.

 \begin{center}
 \unitlength=1.0mm
  \begin{picture}(120.00,40)
   \put(0,5){\begin{picture}(35,28)
 \put(10,5){\circle{2}}
 \put(20,5){\circle{2}}
 \put(30,5){\circle{2}}
 \put(20,15){\circle{2}}
 \put(30,15){\circle{2}}
 \put(30,25){\circle{2}}
 \put(19,5){\vector(-1,0){8}}
 \put(29,5){\vector(-1,0){8}}
 \put(29,15){\vector(-1,0){8}}
 \put(20,6){\vector(0,1){8}}
 \put(30,6){\vector(0,1){8}}
 \put(30,16){\vector(0,1){8}}
 \put(9,1){$s_1$}
 \put(19,1){$s_2$}
 \put(29,1){$s_3$}
 \put(6,6){$t_1$}
 \put(16,16){$t_2$}
 \put(26,26){$t_3$}
  \end{picture}}
 \put(43,17){turns into}
   \put(70,4){\begin{picture}(50,32)
 \put(20,0){\circle*{1.5}}
 \put(33,0){\circle*{1.5}}
 \put(46,0){\circle*{1.5}}
 \put(8.5,11){\circle*{1.5}}
 \put(20.5,23){\circle*{1.5}}
 \put(32.5,35){\circle*{1.5}}
 \put(20,0){\vector(0,1){8}}
 \put(33,0){\vector(0,1){8}}
 \put(46,0){\vector(0,1){8}}
 \put(16,11){\vector(-1,0){7}}
 \put(28,23){\vector(-1,0){7}}
 \put(40,35){\vector(-1,0){7}}
 \put(29,11){\vector(-3,-1){9}}
 \put(42,11){\vector(-3,-1){9}}
 \put(41,23){\vector(-3,-1){9}}
 \put(29,11){\vector(1,3){3}}
 \put(42,11){\vector(1,3){3}}
 \put(41,23){\vector(1,3){3}}
{\thicklines
 \put(20,8){\vector(-4,3){4}}
 \put(33,8){\vector(-4,3){4}}
 \put(46,8){\vector(-4,3){4}}
 \put(32,20){\vector(-4,3){4}}
 \put(45,20){\vector(-4,3){4}}
 \put(44,32){\vector(-4,3){4}}
 }
 \put(19,-4){$\hat s_1$}
 \put(32,-4){$\hat s_2$}
 \put(45,-4){$\hat s_3$}
 \put(4,8){$\hat t_1$}
 \put(16,20){$\hat t_2$}
 \put(28,32){$\hat t_3$}
  \end{picture}}
 \end{picture}
  \end{center}

Note that the new (modified) network is again acyclic, but it need not be
planar in general. Nevertheless, we keep the same notation $G=(V,E)$ for it,
and take $\hat S:=\{\hat s_1,\ldots,\hat s_n\}$ and $\hat T:=\{\hat
t_1,\ldots,\hat t_n\}$ as the sets of sources and sinks in it, respectively.
Sources and sinks are also called \emph{terminal} vertices. Clearly for any
$i,j\in[n]$, there is a natural 1--1 correspondence between the directed paths
from $s_i$ to $t_j$ in the initial network and the ones from $\hat s_i$ to
$\hat t_j$ in the modified network. This is extended to a natural 1--1
correspondence between flag flows, and for $I\subseteq[n]$, we keep notation
$\Phi_I$ for the set of flows going from $\hat S_I:=\{\hat s_i \colon i\in I\}$
to $\hat T_{|I|}:=\{\hat t_1,\ldots,\hat t_{|I|}\}$. A weighting $w$ on the
vertices $v$ of the initial $G$ is transferred to the split-edges of the
modified $G$, namely, $w(e_v):=w(v)$. Then corresponding flows in both networks
have equal weights (which are the products by $\odot$ of the weights of
vertices or split-edges in the flows). This implies that the functions on
$2^{[n]}$ generated by corresponding flows coincide.

We will take advantages from the following obvious property of the modified
$G$:
  \begin{numitem1}
each non-terminal vertex $u$ is incident with exactly one split-edge $e$, and
if $e$ enters (leaves) $u$, then  $\deltain_G(u)=1$ (resp. $\deltaout_G(u)=1$);
each terminal vertex has exactly one incident edge.
  \label{eq:1edge}
  \end{numitem1}

Let $X,Y$ be disjoint subsets of $[n]$ and $|Y|=p+q$. We assume that $p\ge q$
and, as before, denote by $\gamma=\gamma_Y$ the order preserving bijective map
of $[p+q]$ to $Y$. We write $C\triangle D$ for the symmetric difference
$(C-D)\cup(D-C)$ of subsets $C,D$ of a set.

Let us fix a subset $A\in\binom{[p+q]}{p}$. Define $I(A):=X\cup\gamma(A)$ and
$J(A):=X\cup\gamma(\bar A)$, where $\bar A:=[p+q]-A$. Consider an $I(A)$-flow
$\phi$ and a $J(A)$-flow $\phi'$ in $G$. Our method will rely on the following
three lemmas.

  \begin{lemma} \label{lm:sumFFp}
~$E_\phi\triangle E_{\phi'}$ is partitioned into the edge sets of pairwise
disjoint circuits $C_1,\ldots,C_d$ (for some $d$) and simple paths $P_1,\ldots,
P_p$, where each $P_i$ connects a source in $\hat S_{\gamma(A)}$ with either a
source in $\hat S_{\gamma(\bar A)}$ or a sink in the set $\tilde T:=\hat
T_{|X|+p}-\hat T_{|X|+q}$. In each of these circuits and paths, the edges of
$\phi$ and the edges of $\phi'$ have opposed directions (say, the former edges
are forward and the latter ones are backward).
  \end{lemma}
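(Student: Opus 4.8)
The plan is to analyze the subgraph spanned by the symmetric difference $E_\phi\triangle E_{\phi'}$ using a degree-parity argument at each vertex, which is the standard technique for decomposing a symmetric difference of two ``path systems'' into paths and cycles. First I would fix notation: for a vertex $v$, write $d_\phi(v)=\deltain_\phi(v)+\deltaout_\phi(v)$ and similarly for $\phi'$, and recall from the definition of a flag flow that at every non-terminal vertex $v$ each of $\phi,\phi'$ uses either $0$ or $2$ incident edges (one in, one out), while at a terminal vertex it uses $0$ or $1$. The key local observation, using property~\refeq{1edge} of the modified network, is that each non-terminal $u$ has a unique incident split-edge $e_u$, and $e_u$ is the unique edge that can serve as the ``in'' edge (if it enters $u$) or the ``out'' edge (if it leaves $u$) of any flow through $u$; hence at such $u$ the edges of $\phi$ (resp.\ $\phi'$) passing through $u$ always consist of $e_u$ together with exactly one ordinary edge, or nothing at all.

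Next I would examine the multiset $E_\phi\cup E_{\phi'}$ restricted to a vertex $u$ and pass to the symmetric difference. At a non-terminal $u$, the possible local pictures for $(\phi,\phi')$ are: both miss $u$; both use the same pair of edges (these cancel in the symmetric difference); or they use pairs sharing the split-edge $e_u$ but differing in the ordinary edge — in which case $e_u$ also cancels and $u$ is left incident with exactly two edges of $E_\phi\triangle E_{\phi'}$, one from $\phi$ and one from $\phi'$; or one of them misses $u$ and the other uses $\{e_u,\text{ordinary}\}$, leaving $u$ incident with exactly two edges of the symmetric difference, again one from each of $\phi,\phi'$ — wait, this last case gives two edges both belonging to the same flow. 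Here I must be careful: when $\phi$ uses $u$ and $\phi'$ does not, $u$ has degree $2$ in $E_\phi\triangle E_{\phi'}$ with both edges from $\phi$; but one of those two edges is the split-edge $e_u$, and the split-edge is incident to the \emph{other} endpoint too, so tracing a trail through $u$ is still forced. The clean way to phrase this is: in the graph $H$ with edge set $E_\phi\triangle E_{\phi'}$, every non-terminal vertex has degree $0$ or $2$, and every terminal vertex has degree $0$ or $1$. This is the crux and I would prove it by the exhaustive local case check just sketched, invoking~\refeq{1edge} to rule out higher degrees.

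Granting the degree claim, $H$ decomposes uniquely into connected components that are simple cycles (circuits) and simple paths whose endpoints are degree-one vertices, i.e.\ terminal vertices. To identify \emph{which} terminals can be path-endpoints, I would compare the in/out degrees of $\phi$ and $\phi'$ at each terminal: a source $\hat s_i$ has $\deltaout=1$ in $\phi$ iff $i\in I(A)=X\cup\gamma(A)$ and in $\phi'$ iff $i\in J(A)=X\cup\gamma(\bar A)$, so the degree in $H$ at $\hat s_i$ is $1$ exactly when $i$ lies in $\gamma(A)\triangle\gamma(\bar A)=Y$, equivalently $i\in\gamma(A)\cup\gamma(\bar A)$ but not in $X$; the contributions from $X$ cancel. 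Similarly at a sink $\hat t_j$: $\phi$ has $\deltain=1$ iff $j\in[\,|I(A)|\,]=[\,|X|+p\,]$, and $\phi'$ iff $j\in[\,|X|+q\,]$, so the $H$-degree is $1$ exactly for $j\in\tilde T:=\hat T_{|X|+p}-\hat T_{|X|+q}$. Thus the path-endpoints are exactly the sources in $\hat S_{\gamma(A)}\cup\hat S_{\gamma(\bar A)}$ together with the sinks in $\tilde T$. A parity/counting check then pins down the path structure: there are $|A|=p$ sources in $\hat S_{\gamma(A)}$, $|\bar A|=q$ sources in $\hat S_{\gamma(\bar A)}$, and $|\tilde T|=p-q$ sinks, for a total of $2p$ endpoints, hence exactly $p$ paths; since no directed path in the acyclic $G$ can join two sinks or run out of a sink, each $P_i$ must start at a source in $\hat S_{\gamma(A)}$ and end either at a source in $\hat S_{\gamma(\bar A)}$ or at a sink in $\tilde T$.

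Finally, for the orientation statement: along any circuit or path of $H$, consecutive edges meeting at a non-terminal vertex $u$ must (by the degree-$2$ local picture above, where the two surviving edges are one ``$\phi$-edge'' and one ``$\phi'$-edge'' and one of them enters $u$ while the other leaves $u$ in the \emph{digraph} $G$, because each of $\phi,\phi'$ is itself a consistently directed path system) have opposite ``ownership'' and consistent traversal direction; tracing around shows the $\phi$-edges all point one way along the component and the $\phi'$-edges the other way. I would formalize this by an induction along the component, the base case being the local alternation at a single vertex. The main obstacle I anticipate is the bookkeeping in the degree-$2$ local analysis: one must handle the split-edge specially and make sure that ``one edge from $\phi$, one from $\phi'$'' genuinely holds at \emph{every} degree-$2$ vertex (the subtle case being $u\in V_\phi\setminus V_{\phi'}$, resolved by noting the split-edge $e_u$ is shared structurally and forces the trail to continue to $u$'s twin), and to confirm that no component of $H$ is a degree-$2$ vertex set forming something other than a circuit — i.e.\ that $H$ has no ``figure-eight'' — which follows because $H$ is a genuine subgraph (not a multiset) once the common edges, including duplicated split-edges, are removed, together with the maximum-degree-$2$ bound.
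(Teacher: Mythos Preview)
Your overall plan (degree/parity analysis $\Rightarrow$ decomposition into circuits and paths $\Rightarrow$ identification of endpoints $\Rightarrow$ orientation) matches the paper's, and your computation of which terminals have odd degree in $H=E_\phi\triangle E_{\phi'}$ is correct. However, two steps contain genuine gaps.

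\medskip
\noindent\textbf{Orientation.} You assert that at every degree-$2$ non-terminal vertex $u$ of $H$ the two surviving edges have ``opposite ownership'' (one from $\phi$, one from $\phi'$). This is false, and you yourself noticed the counterexample earlier: when $u\in V_\phi\setminus V_{\phi'}$, both surviving edges at $u$ belong to $\phi$. The paper treats the two cases separately: if the two consecutive edges $e,e'$ lie in the \emph{same} flow they are consistently directed through $u$ (one enters, one leaves), hence have the same orientation along the component; if they lie in \emph{different} flows then both $\phi$ and $\phi'$ pass through $u$, so the split-edge $e_u$ lies in both flows and is cancelled, forcing $e,e'$ (by~\refeq{1edge}) to lie on the same side of $u$ (both enter or both leave), hence they have opposite orientations along the component. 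Combining the two cases gives the global statement that $\phi$-edges are all forward and $\phi'$-edges all backward. Your ``induction along the component'' idea is fine, but the base case you state (alternating ownership) is wrong and must be replaced by this two-case local analysis.

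\medskip
\noindent\textbf{Path endpoints.} Your counting ($p+q+(p-q)=2p$ endpoints, hence $p$ paths) is right, but your reason why each path has exactly one endpoint in $\hat S_{\gamma(A)}$ --- ``no directed path in the acyclic $G$ can join two sinks or run out of a sink'' --- does not apply: the $P_i$ are \emph{not} directed paths in $G$; they alternate direction. Nothing you wrote excludes a path joining two sources of $\hat S_{\gamma(A)}$ (or two sinks of $\tilde T$). The paper uses the orientation result to rule this out: if some $P_j$ had both ends in $\hat S_{\gamma(A)}$, its first and last edges would both be extra edges of $\phi$ leaving sources, hence both forward in $G$; but one is at the start and one at the end of $P_j$, so they would have opposite orientations along $P_j$ while belonging to the same flow --- contradicting the orientation statement. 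Then $|\hat S_{\gamma(A)}|=p=$ number of paths forces each path to have exactly one end in $\hat S_{\gamma(A)}$. You should prove orientation first and then use it here, as the paper does.
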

  \begin{proof}
~Observe that a vertex $v$ of $G$ satisfies: (i) $\delta_{\phi}(v)=1$ and
$\delta_{\phi'}(v)=0$ if $v\in\hat S_{\gamma(A)}\cup \tilde T$; (ii)
$\delta_{\phi}(v)=0$ and $\delta_{\phi'}(v)=1$ if $v\in\hat S_{\gamma(\bar
A)}$; (iii) $\delta_{\phi}(v)=\delta_{\phi'}(v)=1$ if $v\in \hat S_X\cup \hat
T_{|X|+q}$; and (iv) $\delta_{\phi}(v),\delta_{\phi'}(v)\in\{0,2\}$ otherwise.
This together with property~\refeq{1edge} implies that any vertex $v$ is
incident with 0,1 or 2 edges in $E_\phi\triangle E_{\phi'}$, and the number is
equal to 1 if and only if $v\in \hat S_{\gamma(A)}\cup\hat S_{\gamma(\bar A)}
\cup \tilde T$. Hence the weakly connected components of the subgraph of $G$
induced by $E_\phi\triangle E_{\phi'}$ are circuits, $C_1,\ldots,C_d$ say, and
simple paths $P_1,\ldots,P_p$, each of the latter connecting two vertices in
$\hat S_{\gamma(A)}\cup\hat S_{\gamma(\bar A)} \cup \tilde T$.

Consider consecutive edges $e,e'$ in a circuit $C_i$ or a path $P_j$. If both
$e,e'$ belong to the same flow among $\phi,\phi'$, then, obviously, they have
the same direction in this circuit/path. Suppose $e,e'$ belong to different
flows. In view of~\refeq{1edge}, the common vertex $v$ of $e,e'$ is
non-terminal and incident with a split-edge $e''$. Clearly $e''$ belongs to
both $\phi,\phi'$, and therefore $e''\ne e,e'$. This implies that either both
$e,e'$ enter $v$ or both leave $v$, so they are directed differently along the
circuit/path containing them. This yields the second assertion in the lemma.

Finally, suppose some path $P_j$ has both ends in $\hat S_{\gamma(A)}$. Then
the first and last edges $e,e'$ of $P_j$ are extra edges of $G$ contained in
$\phi$. But both $e,e'$ leave $\hat S_{\gamma(A)}$, so they are directed
differently along $P_j$, contrary to proved above. Thus, each path $P_j$ has
exactly one end in $\hat S_{\gamma(A)}$ (in view of $|\hat S_{\gamma(A)}|=|\hat
S_{\gamma(\bar A)}|+|\tilde T|$), completing the proof.
  \end{proof}

Figure~\ref{fig:double} illustrates an example of $G,\phi,\phi',\xi$, and
$E_\phi\triangle E_{\phi'}$.

\begin{figure}[htb]

 \begin{center}
 \unitlength=1.0mm
  \begin{picture}(145,53)
   \put(0,10){\begin{picture}(20,43)  
 \put(0,0){\circle{2}}
 \put(8,0){\circle{2}}
 \put(16,0){\circle{2}}
 \put(0,38){\circle{2}}
 \put(12,38){\circle{2}}
 \put(18,38){\circle{2}}
 \put(0,1){\vector(0,1){28.5}}
 \put(8,0){\vector(2,3){4}}
 \put(16,0){\vector(-2,3){4}}
 \put(12,6){\vector(0,1){6}}
 \put(12,12){\vector(1,1){6}}
 \put(12,12){\vector(-1,1){6}}
 \put(6,18){\vector(1,1){6}}
 \put(18,18){\vector(-1,1){6}}
 \put(12,24){\vector(0,1){6}}
 \put(12,30){\vector(-1,0){12}}
 \put(12,30){\vector(0,1){7}}
 \put(0,30){\vector(0,1){7}}
 \put(0,-5){$\hat s_1$}
 \put(8,-5){$\hat s_2$}
 \put(16,-5){$\hat s_3$}
 \put(0,40){$\hat t_1$}
 \put(9,40){$\hat t_2$}
 \put(18,40){$\hat t_3$}
  \end{picture}}
 \put(7,-2){(a)}
   \put(32,10){\begin{picture}(20,43)  
 \put(0,0){\circle{2}}
 \put(8,0){\circle{2}}
 \put(16,0){\circle{2}}
 \put(0,38){\circle{2}}
 \put(12,38){\circle{2}}
 \put(18,38){\circle{2}}
 \put(0,1){\vector(0,1){28.5}}
 \put(16,0){\vector(-2,3){4}}
 \put(12,6){\vector(0,1){6}}
 \put(12,12){\vector(1,1){6}}
 \put(18,18){\vector(-1,1){6}}
 \put(12,24){\vector(0,1){6}}
 \put(12,30){\vector(0,1){7}}
 \put(0,30){\vector(0,1){7}}
  \end{picture}}
 \put(39,-2){(b)}
   \put(64,10){\begin{picture}(20,43)  
 \put(0,0){\circle{2}}
 \put(8,0){\circle{2}}
 \put(16,0){\circle{2}}
 \put(0,38){\circle{2}}
 \put(12,38){\circle{2}}
 \put(18,38){\circle{2}}
 \put(8,0){\vector(2,3){4}}
 \put(12,6){\vector(0,1){6}}
 \put(12,12){\vector(-1,1){6}}
 \put(6,18){\vector(1,1){6}}
 \put(12,24){\vector(0,1){6}}
 \put(12,30){\vector(-1,0){12}}
 \put(0,30){\vector(0,1){7}}
  \end{picture}}
 \put(69,-2){(c)}
   \put(96,10){\begin{picture}(20,43)  
 \put(0,0){\circle{2}}
 \put(8,0){\circle{2}}
 \put(16,0){\circle{2}}
 \put(0,38){\circle{2}}
 \put(12,38){\circle{2}}
 \put(18,38){\circle{2}}
 \put(0,1){\vector(0,1){28.5}}
 \put(8,0){\vector(2,3){4}}
 \put(16,0){\vector(-2,3){4}}
 \put(11.5,6){\vector(0,1){6}}
 \put(12.5,6){\vector(0,1){6}}
 \put(12,12){\vector(1,1){6}}
 \put(12,12){\vector(-1,1){6}}
 \put(6,18){\vector(1,1){6}}
 \put(18,18){\vector(-1,1){6}}
 \put(11.5,24){\vector(0,1){6}}
 \put(12.5,24){\vector(0,1){6}}
 \put(12,30){\vector(-1,0){12}}
 \put(12,30){\vector(0,1){7}}
 \put(-0.5,30){\vector(0,1){7}}
 \put(0.5,30){\vector(0,1){7}}
  \end{picture}}
 \put(103,-2){(d)}
   \put(128,10){\begin{picture}(20,43)  
 \put(0,0){\circle{2}}
 \put(8,0){\circle{2}}
 \put(16,0){\circle{2}}
 \put(0,38){\circle{2}}
 \put(12,38){\circle{2}}
 \put(18,38){\circle{2}}
 \put(0,1){\vector(0,1){28.5}}
 \put(8,0){\vector(2,3){4}}
 \put(16,0){\vector(-2,3){4}}
 \put(12,12){\vector(1,1){6}}
 \put(12,12){\vector(-1,1){6}}
 \put(6,18){\vector(1,1){6}}
 \put(18,18){\vector(-1,1){6}}
 \put(12,30){\vector(-1,0){12}}
 \put(12,30){\vector(0,1){7}}
  \end{picture}}
 \put(135,-2){(e)}
 \end{picture}
  \end{center}
\caption{(a) $G$; \;\;(b) $\phi$; \;\;(c) $\phi'$; \;\;(d) $\xi$; \;\;(e)
$E_\phi\triangle E_{\phi'}$} \label{fig:double}
  \end{figure}

Let $\chi^{E'}$ denote the incidence vector in $\Rset^E$ of a subset $E'$ of
edges of $G$, i.e. $\chi^{E'}(e)=1$ if $e\in E'$, and 0 otherwise. A function
$\xi:E\to\{0,1,2\}$ is called a \emph{double flow} for the sets $I(A),J(A)$ as
above if there exist an $I(A)$-flow $\phi$ and a $J(A)$-flow $\phi'$ such that
$\chi^{E_\phi}+\chi^{E_{\phi'}}=\xi$. We say that $\phi,\phi'$ \emph{decompose}
$\xi$ and denote the number of such pairs $\phi,\phi'$ by
$N_{I(A),J(A)}(\xi)$.  For $i=1,2$, let $\xi^{(i)}$ denote the set of edges $e$
with $\xi(e)=i$, and let $d(\xi)$ denote the number of circuits in the subgraph
of $G$ induced by $\xi^{(1)}$ (i.e. the number $d$ in Lemma~\ref{lm:sumFFp}).
   \begin{lemma} \label{lm:dzeta}
~$N_{I(A),J(A)}(\xi)=2^{d(\xi)}$.
  \end{lemma}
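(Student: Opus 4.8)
The plan is to decompose $\xi$ exactly as in Lemma~\ref{lm:sumFFp}: starting from any decomposing pair $\phi,\phi'$, the edge set $\xi^{(1)}$ (those edges used exactly once) splits into pairwise disjoint circuits $C_1,\dots,C_{d(\xi)}$ and simple paths $P_1,\dots,P_p$, while $\xi^{(2)}$ is the set of edges used by both flows. Crucially, the combinatorial structure of this decomposition — the circuits, the paths, and the doubled edges — depends only on $\xi$, not on the particular pair $\phi,\phi'$ that produced it; this is because a vertex is incident with exactly one edge of $\xi^{(1)}$ iff it lies in $\hat S_{\gamma(A)}\cup\hat S_{\gamma(\bar A)}\cup\tilde T$, a condition on $\xi$ alone, so the components of the $\xi^{(1)}$-subgraph are determined. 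First I would make this precise: fix the partition $\{C_i\},\{P_j\}$ obtained from $\xi$, and observe that every decomposing pair must orient each $C_i$ and each $P_j$ "alternately" (all $\phi$-edges forward, all $\phi'$-edges backward, in some direction along the component), by the argument already given in the proof of Lemma~\ref{lm:sumFFp}.

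Next I would count the choices. The doubled edges $\xi^{(2)}$ must belong to both $\phi$ and $\phi'$ regardless, so they contribute no freedom. For a path $P_j$: by Lemma~\ref{lm:sumFFp} it has exactly one endpoint in $\hat S_{\gamma(A)}$; that endpoint's incident edge is an extra edge leaving the source, hence must lie in $\phi$ (a source of $\phi'$ cannot be there since $I(A)\ne J(A)$ at $\gamma(A)\setminus\gamma(\bar A)$). This pins down the forward direction along $P_j$, so $P_j$ is assigned to the two flows in a unique way. For a circuit $C_i$: there is no terminal vertex forcing the orientation, so either of the two cyclic orientations is admissible — one puts the "odd-position" edges in $\phi$ and the "even-position" edges in $\phi'$, the other swaps them. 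Thus each circuit independently contributes a factor of $2$, and everything else is forced, giving $2^{d(\xi)}$ total.

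The one point that needs real care — and the main obstacle — is verifying that every such assignment of orientations to the $C_i$ (the $2^{d(\xi)}$ combinations) actually yields a valid pair $(\phi,\phi')$ of a flag flow for $I(A)$ and a flag flow for $J(A)$, i.e. that the resulting edge sets $E_\phi,E_{\phi'}$ really are vertex-disjoint unions of directed source-to-sink paths with the correct terminal sets. Here I would argue via the conservation conditions: start from the fixed decomposing pair $\phi_0,\phi'_0$ witnessing $\xi$, and note that toggling the orientation of a single circuit $C_i$ amounts to moving $E_{C_i}\cap E_{\phi_0}$ from $\phi_0$ to $\phi'_0$ and $E_{C_i}\cap E_{\phi'_0}$ from $\phi'_0$ to $\phi_0$. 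One checks that this operation preserves $\deltaout-\deltain$ at every vertex (a circuit is balanced), preserves $\delta(v)\in\{0,1\}$ for $\phi$ and for $\phi'$ separately at each non-terminal vertex (using property~\refeq{1edge} and that $C_i$, being part of $\xi^{(1)}$, meets each vertex in $0$ or $2$ edges, never sharing a vertex with the doubled edges in a bad way), and does not touch any terminal vertex (circuits contain no terminals). Since $\phi$ is acyclic-graph flow data satisfying the right degree constraints, it is a flag flow for $I(A)$, and similarly for $\phi'$; acyclicity of $G$ guarantees these are honest simple directed paths. Conversely, any decomposing pair differs from $\phi_0,\phi'_0$ only by such circuit toggles, since paths and doubled edges are forced. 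This establishes a bijection between decomposing pairs and subsets of $\{C_1,\dots,C_{d(\xi)}\}$, proving $N_{I(A),J(A)}(\xi)=2^{d(\xi)}$.
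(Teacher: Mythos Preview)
Your proposal is correct and follows essentially the same route as the paper's proof: fix one decomposing pair, observe that the partition of $\xi^{(1)}$ into circuits and paths is intrinsic to $\xi$, argue that the doubled edges and the paths $P_j$ are forced (the latter because one endpoint lies in $\hat S_{\gamma(A)}$), and then show that each circuit $C_i$ can be independently ``toggled'' via $E_\phi\mapsto E_\phi\triangle E_{C_i}$ to produce another valid decomposing pair, with every decomposing pair arising this way. Your treatment is somewhat more explicit than the paper's about why the paths are rigid and about verifying the degree constraints after a toggle, but the argument is the same.
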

  \begin{proof}
~Fix a pair $\phi\in\Phi_{I(A)}$ and $\phi'\in\Phi_{J(A)}$ decomposing $\xi$.
Let $C_1,\ldots,C_{d=d(\xi)}$ be the circuits in the subgraph induced by
$\xi^{(1)}$ (=$E_\phi\triangle E_{\phi'}$). By Lemma~\ref{lm:sumFFp}, each
circuit $C_i$ is a concatenation of (directed up to reversing) paths
$Q_1,\ldots,Q_{2k}=Q_0$, where consecutive $Q_j,Q_{j+1}$ are contained in
different flows among $\phi,\phi'$ and either both leave or both enter their
common vertex (note that $C_i$ cannot entirely belong to one of $\phi,\phi'$
since $G$ is acyclic). Therefore, exchanging the pieces $Q_j$ in $\phi,\phi'$
(i.e. replacing $E_\phi$ by $E_\phi\triangle E_{C_i}$, and $E_{\phi'}$ by
$E_{\phi'}\triangle E_{C_i}$), we obtain a decomposition of $\xi$ into another
pair of $I(A)$- and $J(A)$-flows.

The above procedure can be applied to circuits $C_i$ independently. So we can
choose an arbitrary subset $D\subseteq [d(\xi)]$. Let
$\Dscr:=\cup(E_{C_i}\colon i\in D)$. Then the edge set $E_\phi\triangle \Dscr$
induces an $I(A)$-flow $\psi$, and $E_{\phi'}\triangle \Dscr$ induces a
$J(A)$-flow $\psi'$. Furthermore, $\chi^{E_\psi}+\chi^{E_{\psi'}}=\xi$.
Obviously, different subsets $D$ produce different pairs $\psi,\psi'$
decomposing $\xi$ (e.g., the choice $D=\emptyset$ gives the initial pair
$\phi,\phi'$). Conversely, if a pair of $\psi\in\Phi_{I(A)}$ and $\psi'\in
\Phi_{J(A)}$ decomposes $\xi$, then, in view of $E_\psi\cap
E_{\psi'}=\xi^{(2)}$ and $E_\psi\triangle E_{\psi'}=\xi^{(1)}$, one can
conclude that for each circuit $C_i$, the components of $C_i\cap \psi$ and
$C_i\cap \psi'$ are the alternating subpaths $Q_j$ as above. This gives the
desired equality.
  \end{proof}

Next we are going to decompose $\xi$ into a pair of flows whose source sets are
different from $\hat S_{\gamma(A)},\hat S_{\gamma(\bar A)}$. Let
$P_1,\ldots,P_p$ be the paths as in Lemma~\ref{lm:sumFFp}. Exactly $q$ of them
connect $\hat S_{\gamma(A)}$ and $\hat S_{\gamma(\bar A)}$; we call these paths
\emph{essential} and denote their set by $\Pscr(\xi)$. For a path
$P\in\Pscr(\xi)$ with end vertices $\hat s_{\gamma(i)}$ and $\hat
s_{\gamma(j)}$, the pair $\{i,j\}$ is denoted by $\pi(P)$ (in particular,
$|\pi(P)\cap A|=|\pi(P)\cap \bar A|=1$). Define $M(\xi):=\{\pi(P)\colon
P\in\Pscr(\xi)\}$.
   \begin{lemma} \label{lm:path_switch}
Choose an arbitrary subset $\Pi\subseteq M(\xi)$. Define $Z:=\cup(\pi\in\Pi)$
and $A':=A\triangle Z$. Then $\xi$ is decomposed by exactly $2^{d(\xi)}$ pairs
formed by an $I(A')$-flow and a $J(A')$-flow. Therefore,
$N_{I(A'),J(A')}(\xi)=N_{I(A),J(A)}(\xi)$.
  \end{lemma}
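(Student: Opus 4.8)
\textbf{Proof plan for Lemma~\ref{lm:path_switch}.}

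The plan is to mimic the circuit-switching argument of Lemma~\ref{lm:dzeta}, but now performing the exchange operation along the essential paths $P\in\Pscr(\xi)$ rather than along circuits. First I would fix one decomposing pair $\phi\in\Phi_{I(A)}$, $\phi'\in\Phi_{J(A)}$ of $\xi$, and observe, via Lemma~\ref{lm:sumFFp}, that for each essential path $P$ with $\pi(P)=\{i,j\}$ (where $i\in A$, $j\in\bar A$) the edge sets of $P$ alternate between $\phi$ and $\phi'$ in the manner described there. The key point is that switching $\phi,\phi'$ along $P$ — i.e.\ replacing $E_\phi$ by $E_\phi\triangle E_P$ and $E_{\phi'}$ by $E_{\phi'}\triangle E_P$ — turns the $\phi$-path ending at $\hat s_{\gamma(i)}$ into one ending at $\hat s_{\gamma(j)}$, and conversely for $\phi'$; all other terminal incidences, and the multiset of used vertices, are unchanged. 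Hence after switching along all $P$ with $\pi(P)\in\Pi$, one source index $i\in A\cap Z$ migrates from the first flow to the second, and one index $j\in\bar A\cap Z$ migrates back, for each $\pi=\{i,j\}\in\Pi$; the resulting pair is an $I(A')$-flow together with a $J(A')$-flow, where $A'=A\triangle Z$, and it still decomposes $\xi$ since $\chi^{E_\phi}+\chi^{E_{\phi'}}$ is invariant under the simultaneous symmetric differences.

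The second step is to count. Since the circuits $C_1,\dots,C_d$ and the paths $P_1,\dots,P_p$ are pairwise edge-disjoint (Lemma~\ref{lm:sumFFp}), switching along the chosen essential paths and switching along an arbitrary subset $D\subseteq[d(\xi)]$ of circuits are independent operations, and both leave $\xi$ fixed. So the $2^{d(\xi)}$ choices of $D$ produce $2^{d(\xi)}$ distinct pairs of $I(A')$- and $J(A')$-flows decomposing $\xi$. For the reverse inequality — that there are no others — I would argue as in the last paragraph of the proof of Lemma~\ref{lm:dzeta}: any pair $\psi\in\Phi_{I(A')}$, $\psi'\in\Phi_{J(A')}$ decomposing $\xi$ satisfies $E_\psi\cap E_{\psi'}=\xi^{(2)}$ and $E_\psi\triangle E_{\psi'}=\xi^{(1)}$, so the restriction of the pair to each circuit $C_i$ must be one of the two alternating splittings, and its restriction to each path $P_j$ is forced (the non-essential paths are fixed because their ends lie in $\hat S_{\gamma(A)}\cup\hat S_{\gamma(\bar A)}\cup\tilde T$ with prescribed $\phi/\phi'$ labels determined by whether the endpoint is a source in the ``$A'$-part'' or not; the essential ones must be switched exactly for $\pi\in\Pi$ to match the source pattern of $A'$). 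This pins the pair down up to the choice of $D$, giving exactly $2^{d(\xi)}$ pairs and hence $N_{I(A'),J(A')}(\xi)=2^{d(\xi)}=N_{I(A),J(A)}(\xi)$ by Lemma~\ref{lm:dzeta}.

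The main obstacle I anticipate is the bookkeeping needed to verify that switching along $P$ really yields \emph{flag} flows for the new index sets, i.e.\ that after the switch the first flow still enters exactly the sinks $\hat T_{|X|+p}$ in the correct order and the second enters exactly $\hat T_{|X|+q}$: one must check that the endpoints of the essential paths are genuinely sources (not sinks in $\tilde T$), so that only source indices are affected, while the sink incidences and the incidences at vertices of $\hat S_X\cup\hat T_{|X|+q}$ are untouched. This uses $|\pi(P)\cap A|=|\pi(P)\cap\bar A|=1$ together with the incidence analysis (i)--(iv) in the proof of Lemma~\ref{lm:sumFFp}. A second, more routine, point is confirming that $I(A')$ and $J(A')$ are indeed the complementary pair $X\cup\gamma(A')$ and $X\cup\gamma(\bar A')$ — that is, that $\bar{A'}=\bar A\triangle Z$ — which is immediate since $Z\subseteq[p+q]$ and symmetric difference commutes with complementation.
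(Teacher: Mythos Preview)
Your proposal is correct and follows essentially the same approach as the paper: fix a decomposition $(\phi,\phi')$, switch along the essential paths in $\Pi$ to obtain an $I(A')$/$J(A')$ pair decomposing $\xi$, and then invoke the circuit-counting argument of Lemma~\ref{lm:dzeta} to get the exact count $2^{d(\xi)}$. Your write-up is in fact more explicit than the paper's, which compresses the counting step to the single phrase ``arguing as in Lemma~\ref{lm:dzeta}''; the only minor slip is the phrase ``the $\phi$-path ending at $\hat s_{\gamma(i)}$'' (flows \emph{begin} at sources), but the intended effect of the switch is clear and correct.
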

  \begin{proof}
Each path $P\in \Pscr(\xi)$ is a concatenation of an even number of subpaths
$Q_1,\ldots,Q_r$ alternately contained in $\phi$ and $\phi'$. Let
$\pi(P)=\{i,j\}$; then one of $i,j$ is in $A$, and the other in $\bar A$. Also
one source among $\hat s_{\gamma(i)},\hat s_{\gamma(j)}$ belongs to $Q_1$, and
the other to $Q_r$. Exchanging in $\phi,\phi'$ the corresponding pieces $Q_k$
of $P$, we obtain a decomposition of $\xi$ into an $I(\tilde A)$-flow and a
$J(\tilde A)$-flow, where $\tilde A:=A\triangle\{i,j\}$. Now the result is
obtained by arguing as in Lemma~\ref{lm:dzeta}.
  \end{proof}

In what follows we will use the fact that, although the modified graph $G$ may
not be planar, its subgraph $G(\xi)$ induced by the edge set
$\xi^{(1)}\cup\xi^{(2)}$ is planar.

To see this, let $\xi$ be decomposed by flows $\phi,\phi'$ (as before) and
consider in the initial graph $G$ a non-terminal vertex $v$ which belongs to
both flows $\phi,\phi'$. Let $a,a'$ be the edges of $\phi$ entering and leaving
$v$, respectively, and let $b,b'$ be similar edges for $\phi'$. The only
situation when the modified graph $G$ is not locally planar in a small
neigborhood of the split-edge $e_v$ is that all $a,a',b,b'$ are different and
follow in this order (clockwise or counterclockwise) around $v$. We assert that
this is not the case. Indeed, $a,a'$ belong to a directed path $P$ in $G$ from
a source $s_i$ to a sink $t_{i'}$, and similarly there is a directed path $Q$
from $s_j$ to $t_{j'}$ containing $b,b'$. From the facts that the initial graph
$G$ is planar and acyclic and that the edges $a,a',b,b'$ occur in this order
around $v$ one can conclude that the paths $P,Q$ can meet only at $v$. This
implies that the terminals $s_i,t_{i'},s_j,t_{j'}$ are different and follow in
this order in the boundary of $G$, yielding a contradiction. Thus, $G(\xi)$ is
planar, as required.


\section{\Large Balanced collections and the main theorem}  \label{sec:balan}

In this section we use the above observations and results to construct
collections providing stable quadratic relations.

As before, consider a set $A\in\binom{[p+q]}{p}$, a double flow $\xi$ for
$I(A),J(A)$ and the set of pairs $M=M(\xi)$. It will be convenient to think
that all pairs are ordered: if a pair $\pi$ consists of elements $i,j$ and
$i<j$, we write $\pi=(i,j)$ or $\pi=ij$ and call it an \emph{arc} in $M$. We
denote the interval $\{i,i+1,\ldots,j\}$ by $[i..j]$ or by $[\pi]$ and say that
an element $k$ is \emph{covered} by $\pi$ if $k\in [\pi]$. An element in
$[p+q]-\cup(\pi\in M)$ is called \emph{free}.

We observe that $M$ possesses the following properties:
  \begin{numitem1}
  \begin{itemize}
\item[(i)] $|M|=q$, the arcs in $M$ are mutually disjoint, and $|\pi\cap A|=
|\pi\cap\bar A|=1$ for each $\pi\in M$;
\item[(ii)] the set $M$ is \emph{nested}, which means that for any two arcs
$\pi,\pi'\in M$, the intervals $[\pi]$ and $[\pi']$ are either disjoint or one
includes the other;
\item[(iii)] no free element is covered by an arc in $M$ (i.e. $\pi\in M$ and
$k\in[\pi]$ imply $k\in\pi'$ for some $\pi'\in M$).
  \end{itemize}
   \label{eq:nest}
   \end{numitem1}

\noindent Indeed, (i) is obvious. Violation of~(ii) means the existence of arcs
$\pi=ij$ and $\pi'=i'j'$ in $M$  such that $i<i'<j<j'$. Then the sources $
s_{\gamma(i)},s_{\gamma(i')},s_{\gamma(j)},s_{\gamma(j')}$ follow in this order
in the boundary of the initial $G$. Since the graph $G(\xi)$ is planar, the
path $P\in\Pscr(\xi)$ connecting $\hat s_{\gamma(i)},\hat s_{\gamma(j)}$
intersects the path $P'\in\Pscr(\xi)$ connecting $\hat s_{\gamma(i')},\hat
s_{\gamma(j')}$. But $P,P'$ must be disjoint (cf. Lemma~\ref{lm:sumFFp}). To
see~(iii), consider an arc $\pi=ij\in M$ and a free element $k$. Since $k$ is
free, the subgraph induced by $\xi^{(1)}$ contains a path $P$ connecting the
source $\hat s_{\gamma(k)}$ and some sink $\hat t_r\in\tilde T$. In case
$k\in[\pi]$, the path $P$ would intersect the path in $\Pscr(\xi)$ connecting
$\hat s_{\gamma(i)}$ and $\hat s_{\gamma(j)}$ (since $G(\xi)$ is planar and
$\hat s_{\gamma(i)},\hat s_{\gamma(k)},\hat s_{\gamma(j)},\hat t_r$ follow in
this order in its boundary), which is impossible.
\smallskip

The above observations inspire consideration of more abstract objects. A set
$M$ of ordered pairs (arcs) in $[p+q]$ satisfying~\refeq{nest} is called a
\emph{feasible matching} for $A$. The set of all feasible matchings for $A$ is
denoted by $\Mscr(A)$, and we refer to a pair $(A,M)$, where $M\in\Mscr(A)$, as
a \emph{configuration}. For a collection $\Ascr\subseteq\binom{[p+q]}{p}$, the
(multi)set of all configurations $(A,M)$ with $A\in\Ascr$ is denoted by
$\Kscr(\Ascr)$.

The \emph{exchange operation} applied to a configuration $(A,M)$ and to a
chosen subset $\Pi\subseteq M$ makes the $p$-element set
$A':=A\triangle(\cup(\pi\in\Pi))$; in other words, we swap the elements of $A$
and $\bar A$ in each arc $\pi\in\Pi$. Clearly $M$ becomes a feasible matching
for $A'$, and the exchange operation applied to the configuration $(A',M)$ and
the same subset $\Pi$ returns $A$.
\medskip

 \noindent\textbf{Definition.}
Let us say that two (multi)collections $\Ascr,\Ascr'\subseteq\binom{[p+q]}{p}$
are \emph{balanced} if there exists a bijection of $\Kscr(\Ascr)$ to
$\Kscr(\Ascr')$ that sends each configuration $(A,M)$ in the former to a
configuration $(A',M)$ in the latter. (We rely on the simple fact that if
$(A,M)$ and $(A',M)$ are two configurations with the same matching $M$, then
$A'$ can be obtained from $A$ by the exchange operation w.r.t. some
$\Pi\subseteq M$.) Equivalently, $\Ascr,\Ascr'$ are balanced if for each
matching $M$ in $[p+q]$, the number of times $M$ occurs in sets $\Mscr(A)$
among $A\in\Ascr$ is equal to a similar number in sets $\Mscr(A')$ among
$A'\in\Ascr'$. We can express this condition as
  $$
  \Mscr(\Ascr)=\Mscr(\Ascr'),
  $$
where for a collection $\Ascr''\subseteq\binom{[p+q]}{p}$, ~$\Mscr(\Ascr'')$
denotes the multiset consisting of matchings $M$ taken with multiplicities
$|\{A\in\Ascr''\colon M\in\Mscr(A)\}|$.
\medskip

This notion plays a central role in our description, and the main result is as
follows.
  \begin{theorem} \label{tm:main}
~Let $\Ascr,\Ascr'\subseteq\binom{[p+q]}{p}$. The following statements are
equivalent:

{\rm (i)} ~\refeq{S_pluck} is a stable quadratic relation;

{\rm (ii)} the pair $\Ascr,\Ascr'$ is balanced.
  \end{theorem}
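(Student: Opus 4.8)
The plan is to prove the two implications separately, relying on the decomposition and counting lemmas of Section~2 for the sufficiency direction, and postponing the harder necessity direction to the later section as the excerpt indicates.

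\textbf{Sufficiency (ii)$\Rightarrow$(i).} Fix an arbitrary commutative semiring $\mathfrak S$, a planar network $G$, a weighting $w:V\to\mathfrak S$, and disjoint $X,Y\subseteq[n]$ with $|Y|=p+q$. The key observation is that each summand $f(X\cup\gamma_Y(A))\odot f(X\cup\gamma_Y(\bar A))$ on the left of~\refeq{S_pluck} expands, via~\refeq{S_f} and distributivity, into a sum over pairs $(\phi,\phi')$ with $\phi\in\Phi_{I(A)}$, $\phi'\in\Phi_{J(A)}$, of the monomial $w(\phi)\odot w(\phi')$. Since the paths and circuits making up $E_\phi\triangle E_{\phi'}$ are vertex-disjoint and the split-edges carry all the weight, the weight $w(\phi)\odot w(\phi')$ depends only on the double flow $\xi=\chi^{E_\phi}+\chi^{E_{\phi'}}$ (each split-edge $e_v$ with $\xi(e_v)=1$ contributes $w(v)$ once, each with $\xi(e_v)=2$ contributes $w(v)^{\odot 2}$, via the identification of vertex-weights with split-edge weights). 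Call this common weight $w(\xi)$. Hence the left side of~\refeq{S_pluck} equals
\[
\bigoplus_{A\in\Ascr}\ \bigoplus_{\xi\ \text{d.f. for }I(A),J(A)} N_{I(A),J(A)}(\xi)\cdot w(\xi),
\]
where $N\cdot w(\xi)$ abbreviates the $N$-fold $\oplus$-sum of $w(\xi)$, and similarly for the right side with $\Ascr'$. By Lemma~\ref{lm:dzeta}, $N_{I(A),J(A)}(\xi)=2^{d(\xi)}$, which depends only on $\xi$, not on $A$; and by Lemma~\ref{lm:path_switch}, the set of double flows $\xi$ attached to $(A,M)$-type data is governed by the matching $M(\xi)$: precisely, a double flow $\xi$ with $M(\xi)=M$ arises for $I(A),J(A)$ exactly when $M$ is a feasible matching for $A$, and then the exchange operation matches it up with the same $\xi$ for any $A'$ obtained from $A$ by an exchange on $\Pi\subseteq M$.

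\textbf{Reorganizing the sums by configurations.} The plan is to re-index both global $\oplus$-sums by configurations. A double flow $\xi$ (as a function $E\to\{0,1,2\}$) together with the choice of which of its essential paths to ``switch'' is, by Lemmas~\ref{lm:sumFFp}--\ref{lm:path_switch} and the planarity of $G(\xi)$ established at the end of Section~2, exactly the data of a configuration $(A,M(\xi))$ with $A\in\binom{[p+q]}{p}$, $M(\xi)\in\Mscr(A)$, together with the ``core'' of $\xi$ away from the essential paths and circuits. More carefully: group the double flows appearing on the left side according to the underlying edge-function restricted to the non-essential part together with the matching $M=M(\xi)$; for fixed such core data and fixed $M$, the essential paths can be independently switched, so the double flows $\xi$ contributing with $A\in\Ascr$ and $M(\xi)=M$ are in bijection with $\{A\in\Ascr: M\in\Mscr(A)\}$, each contributing the \emph{same} weight-times-multiplicity $2^{d(\xi)}w(\xi)$ (the weight is unchanged by switching an essential path, since switching only redistributes which split-edges are hit, and in fact a careful check shows the multiset of split-edge multiplicities is preserved — this is the point that needs to be verified, using that an essential path alternates between $\phi$ and $\phi'$ so switching swaps a ``$\xi=1$ from $\phi$'' with a ``$\xi=1$ from $\phi'$'' and leaves the $\xi$-vector itself, hence $w(\xi)$, literally unchanged). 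Therefore the left side equals a $\oplus$-sum, over all ``core + matching $M$'' data, of $\big|\{A\in\Ascr:M\in\Mscr(A)\}\big|\cdot 2^{d}\cdot w(\xi)$, and the right side is the identical expression with $\Ascr'$ in place of $\Ascr$. The balancedness hypothesis $\Mscr(\Ascr)=\Mscr(\Ascr')$ says exactly that the two multiplicity functions $M\mapsto|\{A\in\Ascr:M\in\Mscr(A)\}|$ and $M\mapsto|\{A'\in\Ascr':M\in\Mscr(A')\}|$ coincide, so the two $\oplus$-sums are term-by-term equal. This proves~\refeq{S_pluck}.

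\textbf{Necessity (i)$\Rightarrow$(ii).} Here the plan is to argue by contraposition, and (as announced in the introduction) it suffices to exhibit a single algebraic counterexample over $\mathfrak R=\Rset$. The natural choice is the half-grid $\Gamma_{p+q}$, or a slight variant, with a generic weighting; one shows that the coefficient of a suitable square-free monomial in the polynomial identity~\refeq{a_pluck} equals, up to the universal factor $2^{d}$, the discrepancy $|\{A\in\Ascr:M\in\Mscr(A)\}|-|\{A'\in\Ascr':M\in\Mscr(A')\}|$ for an appropriately chosen matching $M$; if the pair is not balanced, some such discrepancy is nonzero, so the identity fails for a generic (hence some specific real) weighting. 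The main obstacle, and the reason this half is deferred in the paper to Section~\ref{sec:(i)-(ii)}, is the converse bookkeeping: one must show that \emph{every} matching $M$ is actually realized by some double flow in the chosen network with a weight monomial not cancelled by any other configuration — i.e. that the map from configurations to monomials is injective enough to detect imbalance. Constructing a network (or a family of networks, one per target matching $M$) in which the essential paths realizing $M$ carry algebraically independent weights, and verifying that distinct configurations give distinct monomials, is the delicate part; the sufficiency direction, by contrast, only needs the weight-preservation and counting Lemmas~\ref{lm:dzeta} and~\ref{lm:path_switch} already in hand.
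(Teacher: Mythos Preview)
Your sufficiency argument (ii)$\Rightarrow$(i) is correct and is essentially the paper's Proposition~\ref{pr:balance-vq}: expand each side over double flows, observe that $w(\phi)\odot w(\phi')$ and $N_{I(A),J(A)}(\xi)=2^{d(\xi)}$ depend only on $\xi$, and use that a fixed $\xi$ is a double flow for $I(A),J(A)$ precisely when $M(\xi)\in\Mscr(A)$, so balancedness matches the two sums term by term. Your ``core data'' bookkeeping is more elaborate than needed (switching an essential path leaves $\xi$ itself unchanged, so one may simply swap the order of summation and sum over $\xi$ first), but the logic is the same.

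For the necessity direction your sketch diverges from the paper. You propose the half-grid $\Gamma_{p+q}$ with a generic real weighting and then hope to isolate a monomial whose coefficient records the discrepancy for some matching $M$; as you yourself note, the obstacle is proving that distinct configurations yield distinct monomials. The paper sidesteps this entirely: given a matching $M$ with $|\Ascr_M|\ne|\Ascr'_M|$, it builds a \emph{bespoke} planar network $G_M$ (a forest of half-circumferences, one per arc of $M$, glued along the nesting structure) and takes the constant weighting $w\equiv 1$. This $G_M$ is engineered so that (P1) for every $A$ with $M\in\Mscr(A)$ there is a \emph{unique} $A$-flow and a unique $\bar A$-flow, while (P2) if $M\notin\Mscr(A)$ then one of $\Phi_A,\Phi_{\bar A}$ is empty. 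Hence $f(A)f(\bar A)$ equals $1$ or $0$ according as $M\in\Mscr(A)$ or not, and the two sides of~\refeq{S_pluck} become literally $|\Ascr_M|$ and $|\Ascr'_M|$. This buys a clean counting argument with no genericity or monomial-separation needed; the price is the explicit construction of $G_M$ and the inductive verification of (P1)--(P2), which is where the actual work in Section~\ref{sec:(i)-(ii)} lies.
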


Part (i)$\Rightarrow$(ii) of this theorem will be shown in
Section~\ref{sec:(i)-(ii)}. In its turn, part (ii)$\Rightarrow$(i) can be
immediately proved by relying on the lemmas from the previous section.
  \begin{prop} \label{pr:balance-vq}
~Let $\Ascr,\Ascr'\subseteq\binom{[p+q]}{p}$ be balanced. Then~\refeq{S_pluck}
holds for any disjoint subsets $X,Y\subseteq[n]$ with $|Y|=p+q$ and any
SFG-function $f$ on $2^{[n]}$ (concerning arbitrary $G,w,\mathfrak{S}$ as
above).
  \end{prop}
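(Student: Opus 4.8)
The plan is to expand both sides of~\refeq{S_pluck} as sums (over $\oplus$) of flow-weight products, and then exhibit a weight-preserving bijection between the index sets of the two sums, using the balancedness of $\Ascr,\Ascr'$ together with Lemmas~\ref{lm:sumFFp}--\ref{lm:path_switch}. Concretely, fix $G$, $w$, $X$, $Y$ (with $|Y|=p+q$, $p\ge q$) and the semiring $\mathfrak{S}$, and write $f=f_w$. For $A\in\Ascr$, using~\refeq{S_f} and distributivity, the summand $f(I(A))\odot f(J(A))$ equals $\bigoplus_{(\phi,\phi')} w(\phi)\odot w(\phi')$ over all pairs with $\phi\in\Phi_{I(A)}$, $\phi'\in\Phi_{J(A)}$; and $w(\phi)\odot w(\phi')$ depends only on the double flow $\xi=\chi^{E_\phi}+\chi^{E_{\phi'}}$ — call its weight $w(\xi):=\bigodot_{e}w(e)^{\xi(e)}$ (with $e$ ranging over split-edges, matching the original vertex weights; the precise bookkeeping is routine). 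So the left-hand side of~\refeq{S_pluck} becomes
\[
\bigoplus_{A\in\Ascr}\ \bigoplus_{\xi\ \text{a double flow for}\ I(A),J(A)}\ N_{I(A),J(A)}(\xi)\cdot w(\xi),
\]
where $N\cdot w(\xi)$ abbreviates the $N$-fold $\oplus$-sum of $w(\xi)$ with itself; and similarly for the right-hand side with $\Ascr'$.

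Next I would pass from double flows to configurations. A double flow $\xi$ for $I(A),J(A)$ determines, via Lemma~\ref{lm:sumFFp} and the discussion following Lemma~\ref{lm:path_switch}, the set of essential paths $\Pscr(\xi)$ and hence a feasible matching $M(\xi)\in\Mscr(A)$; by the planarity of $G(\xi)$ one checks (exactly as in the verification of~\refeq{nest}) that $M(\xi)$ satisfies~\refeq{nest}, so $(A,M(\xi))$ is a configuration in $\Kscr(\Ascr)$. The key point, supplied by Lemma~\ref{lm:path_switch}, is that $\xi$ is simultaneously a double flow for \emph{every} pair $I(A'),J(A')$ obtained from $A$ by an exchange operation along a subset of $M(\xi)$, with $N_{I(A'),J(A')}(\xi)=N_{I(A),J(A)}(\xi)=2^{d(\xi)}$ by Lemma~\ref{lm:dzeta}. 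Thus the ``primitive'' data underlying each summand is really a pair $(\xi_0,(A,M))$, where $\xi_0$ is the ``unsigned'' picture — the edge set $\xi^{(1)}\cup\xi^{(2)}$ together with the multiplicities, regarded independently of which source-labelling $A$ we attach — and $(A,M)$ is a configuration compatible with it; two configurations $(A,M)$ and $(A',M)$ are compatible with the same $\xi_0$ exactly when $M=M'$ and $A'=A\triangle(\cup\Pi)$ for some $\Pi\subseteq M$. I would make this precise by defining, for each configuration $c=(A,M)$, the quantity
\[
g(c):=\bigoplus_{\xi\,:\,M(\xi)=M,\ \text{$\xi$ a double flow for }I(A),J(A)} 2^{d(\xi)}\cdot w(\xi),
\]
and then observing, via Lemma~\ref{lm:path_switch}, that $g(A,M)=g(A',M)$ whenever $(A,M)$ and $(A',M)$ are both configurations (the exchange operation along $\Pi\subseteq M$ gives a weight-preserving bijection between the respective index sets of double flows). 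Consequently the left-hand side of~\refeq{S_pluck} equals $\bigoplus_{c\in\Kscr(\Ascr)}g(c)$ and the right-hand side equals $\bigoplus_{c'\in\Kscr(\Ascr')}g(c')$.

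Finally, invoke the hypothesis: since $\Ascr,\Ascr'$ are balanced, there is a bijection $\Kscr(\Ascr)\to\Kscr(\Ascr')$ sending each $(A,M)$ to some $(A',M)$ with the same matching $M$. Because $g$ depends on a configuration only through its matching (the equality $g(A,M)=g(A',M)$ just established), this bijection is $g$-preserving term by term, and therefore $\bigoplus_{c\in\Kscr(\Ascr)}g(c)=\bigoplus_{c'\in\Kscr(\Ascr')}g(c')$, which is precisely~\refeq{S_pluck}. The main obstacle, and the step needing the most care, is the bookkeeping in the first reduction: verifying that $w(\phi)\odot w(\phi')$ genuinely depends only on $\xi$ (straightforward from commutativity of $\odot$), that undefined terms — when $\Phi_{I(A)}$ or $\Phi_{J(A)}$ is empty, cf.\ Remark~1 — are correctly accounted for, and above all that grouping the $\oplus$-sum by configurations is legitimate in an arbitrary commutative semiring, where we have only associativity, commutativity and distributivity to work with and no cancellation; once the sums are organized as above, balancedness finishes the argument immediately.
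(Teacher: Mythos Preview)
Your proof is correct and follows essentially the same approach as the paper's: expand each side via double flows, use Lemmas~\ref{lm:dzeta} and~\ref{lm:path_switch} to see that the contribution of a pair $(A,\xi)$ depends only on $\xi$ and the configuration $(A,M(\xi))$, and then invoke the balancedness bijection $\Kscr(\Ascr)\to\Kscr(\Ascr')$. The only difference is organizational---you package the double flows with a given matching into the quantity $g(A,M)$ and observe that $g$ depends only on $M$, whereas the paper lifts $\beta$ directly to a bijection on pairs $(A,\xi)$; these are equivalent presentations of the same argument.
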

  \begin{proof}
~Fix corresponding $G,w,\mathfrak{S},X,Y$ and consider the function $f=f_w$
determined by the weighting $w$. For $A\in\binom{[p+q]}{p}$, let $\Xi(A)$
denote the set of all distinct double flows $\xi$ for $I(A),J(A)$ (considering
$G$ in the modified form). The summand concerning $A\in\Ascr$ in the l.h.s.
of~\refeq{S_pluck} can be expressed via double flows as follows:
  \begin{multline} \label{eq:ff-zeta}
  f(I(A))\odot f(J(A))=\left(\bigoplus\nolimits_{\phi\in\Phi_{I(A)}} w(\phi)\right)
     \odot \left(\bigoplus\nolimits_{\phi'\in\Phi_{J(A)}} w(\phi')\right) \\
  =\bigoplus\nolimits_{\xi\in\Xi(A)} N_{I(A),J(A)}(\xi)
  \; w^{\odot\xi},\quad
  \end{multline}
where $N_{I(A),J(A)}(\xi)$ is the number of pairs
$(\phi,\phi')\in(\Phi_{I(A)},\Phi_{J(A)})$ with
$\chi^{E_\phi}+\chi^{E_{\phi'}}=\xi$ (cf. Lemma~\ref{lm:dzeta}), and
$w^{\odot\xi}$ is the function on the set $E_V$ of split-edges taking values
$w(e)\odot\ldots\odot w(e)$ ($\xi(e)$ times), $e\in E_V$. Do similarly for the
summand concerning $A'\in\Ascr'$ in the r.h.s. of~\refeq{S_pluck}.

We associate to each $\xi\in\Xi(A)$ the configuration $K_{A,\xi}:=(A,M(\xi))$.
Let $\beta:\Kscr(\Ascr)\to\Kscr(\Ascr')$ be the corresponding bijection
(existing as $\Ascr,\Ascr'$ are balanced). For $A\in\Ascr$ and $\xi\in\Xi(A)$,
~$\beta$ sends the configuration $K_{A,\xi}$ to a configuration $(A',M)$ with
$A\in\Ascr'$ and $M=M(\xi)$. Then $\xi$ is a double flow for $I(A'),J(A')$ as
well. Therefore, $\beta$ gives a 1--1 correspondence between the set of pairs
$(A\in\Ascr,\xi\in\Xi(A))$ and the set of pairs $(A'\in\Ascr',\xi'\in\Xi(A'))$.
Moreover, corresponding pairs $(A,\xi)$ and $(A',\xi')$ are such that
$\xi=\xi'$. By Lemma~\ref{lm:path_switch}, we have $N_{I(A),J(A)}(\xi)
=N_{I(A'),J(A')}(\xi)$. Now the desired equality~\refeq{S_pluck} follows by
comparing the $\oplus$-sum of the last terms in~\refeq{ff-zeta} over
$A\in\Ascr$ with a similar sum over $A'\in\Ascr'$.
  \end{proof}


\section{\Large Examples of stable quadratic relations}  \label{sec:relat}

In this section we illustrate the method described in the previous section by
exhibiting several classes of stable quadratic relations on SFG-functions.
According to Proposition~\ref{pr:balance-vq}, once we are able to show that one
or another pair of collections $\Ascr,\Ascr'\subseteq \binom{[p+q]}{p}$ is
balanced, we can declare that relation~\refeq{S_pluck} involving these
collections is stable. Recall that speaking of a stable quadratic relation
(with collections $\Ascr,\Ascr'$ fixed), or an \emph{sq-relation} for short, we
mean that~\refeq{S_pluck} holds for any SFG-function $f:2^{[n]}\to
\mathfrak{S}$ (concerning arbitrary $G,w,\mathfrak{S}$) and any disjoint sets
$X,Y\subseteq[n]$ with $|Y|=p+q$.

When considering and visualizing one or another ordered partition $(A,\bar A)$
of $[p+q]$, it will be convenient for us to call elements of $A$ \emph{white},
and elements of $\bar A$ \emph{black}. \medskip

\noindent \textbf{1.} When $p=2$ and $q=1$, the collection $\binom{[p+q]}{p}$
consists of three 2-element sets $A$, namely, 12,13,23, and their complements
$\bar A$ are the 1-element sets 3,2,1, respectively. Since $q=1$, a feasible
matching consists of a unique arc. The sets 12 and 23 admit only one feasible
matching each, namely, $\Mscr(12)=\{\{23\}\}$ and $\Mscr(23)=\{\{12\}\}$,
whereas 13 has two feasible matchings, namely, $\Mscr(13)=\{\{12\},\{23\}\}$.
Therefore, the collections $\Ascr:=\{13\}$ and $\Ascr':=\{12,23\}$ are
balanced. The corresponding configurations and bijection are illustrated in the
picture where the 2-element sets (forming $\Ascr,\Ascr'$) and their 1-element
complements are indicated by white and black circles, respectively.

 \begin{center}
  \unitlength=1mm
  \begin{picture}(140,20)
  \put(20,5){\circle{2}}
  \put(20,15){\circle{2}}
  \put(40,5){\circle{2}}
  \put(40,15){\circle{2}}
  \put(90,15){\circle{2}}
  \put(100,5){\circle{2}}
  \put(100,15){\circle{2}}
  \put(110,5){\circle{2}}
  \put(30,5){\circle*{2}}
  \put(30,15){\circle*{2}}
  \put(90,5){\circle*{2}}
  \put(110,15){\circle*{2}}
  \multiput(50,5)(0,10){2}%
{\put(0,0){\line(1,0){30}}
  \put(0,0){\line(2,1){4}}
  \put(0,0){\line(2,-1){4}}
  \put(30,0){\line(-2,1){4}}
  \put(30,0){\line(-2,-1){4}}}
  \multiput(20,5)(0,0){1}%
{\qbezier(0,0)(5,4)(10,0)}
  \multiput(30,15)(0,0){1}%
{\qbezier(0,0)(5,4)(10,0)}
  \multiput(90,5)(0,0){1}%
{\qbezier(0,0)(5,4)(10,0)}
  \multiput(100,15)(0,0){1}%
{\qbezier(0,0)(5,4)(10,0)}
  \put(20,0){1}
  \put(30,0){2}
  \put(40,0){3}
  \put(90,0){1}
  \put(100,0){2}
  \put(110,0){3}
  \put(15,3){\line(0,1){14}}
  \put(0,8){$A=13$}
  \put(117,3){\line(0,1){5}}
  \put(120,4){$A'=23$}
  \put(117,13){\line(0,1){5}}
  \put(120,14){$A'=12$}
  \end{picture}
   \end{center}

This gives rise to an sq-relation on triples in $[n]$ (generalizing AP3- and
TP3-relations~\refeq{AP3},\refeq{TP3}): for any $i<j<k$ (forming $Y$) and
$X\subseteq[n]-\{i,j,k\}$, one holds:
   \begin{equation} \label{eq:SP3}
    f(Xik)\odot f(Xj)=(f(Xij)\odot f(Xk))\oplus(f(Xjk)\odot f(Xi)).
    \end{equation}

 \medskip
\noindent \textbf{2.} Let $p=q=2$. Take the collections $\Ascr:=\{13\}$ and
$\Ascr':=\{12,14\}$ in $\binom{[4]}{2}$. One can see that each of 12 and 14
admits a unique feasible matching: $\Mscr(12)=\{14,23\}$ and
$\Mscr(14)=\{12,34\}$, whereas $\Mscr(13)$ consists of two feasible matchings,
just the same $\{14,23\}$ and $\{12,34\}$. Therefore, $\Ascr,\Ascr'$ are
balanced; see the picture where the arcs involved in the corresponding exchange
operations are marked with crosses.

 \begin{center}
  \unitlength=1mm
  \begin{picture}(150,20)
  \put(20,5){\circle{2}}
  \put(20,15){\circle{2}}
  \put(40,5){\circle{2}}
  \put(40,15){\circle{2}}
  \put(90,15){\circle{2}}
  \put(90,5){\circle{2}}
  \put(100,15){\circle{2}}
  \put(120,5){\circle{2}}
  \put(30,5){\circle*{2}}
  \put(30,15){\circle*{2}}
  \put(50,5){\circle*{2}}
  \put(50,15){\circle*{2}}
  \put(100,5){\circle*{2}}
  \put(110,5){\circle*{2}}
  \put(110,15){\circle*{2}}
  \put(120,15){\circle*{2}}
  \multiput(60,5)(0,10){2}%
{\put(0,0){\line(1,0){20}}
  \put(0,0){\line(2,1){4}}
  \put(0,0){\line(2,-1){4}}
  \put(20,0){\line(-2,1){4}}
  \put(20,0){\line(-2,-1){4}}}
  \multiput(20,5)(0,0){1}%
{\qbezier(0,0)(5,4)(10,0)}
  \multiput(40,5)(0,0){1}%
{\qbezier(0,0)(5,4)(10,0)}
  \multiput(30,15)(0,0){1}%
{\qbezier(0,0)(5,4)(10,0)}
  \multiput(90,5)(0,0){1}%
{\qbezier(0,0)(5,4)(10,0)}
  \multiput(110,5)(0,0){1}%
{\qbezier(0,0)(5,4)(10,0)}
  \multiput(100,15)(0,0){1}%
{\qbezier(0,0)(5,4)(10,0)}
  \multiput(20,15)(0,0){1}%
{\qbezier(0,0)(15,10)(30,0)}
  \multiput(90,15)(0,0){1}%
{\qbezier(0,0)(15,10)(30,0)}
  \put(44,6){x}
  \put(34,16){x}
  \put(114,6){x}
  \put(104,16){x}
  \put(20,0){1}
  \put(30,0){2}
  \put(40,0){3}
  \put(50,0){4}
  \put(90,0){1}
  \put(100,0){2}
  \put(110,0){3}
  \put(120,0){4}
  \put(15,3){\line(0,1){14}}
  \put(0,8){$A=13$}
  \put(127,3){\line(0,1){5}}
  \put(130,4){$A'=14$}
  \put(127,13){\line(0,1){5}}
  \put(130,14){$A'=12$}
  \end{picture}
   \end{center}

As a consequence, we obtain an sq-relation on quadruples
(generalizing~\refeq{AP4} and its tropical counterpart): for any $i<j<k<\ell$
(forming $Y$) and $X\subseteq[n]-\{i,j,k,\ell\}$, one holds:
   \begin{equation} \label{eq:SP4}
    f(Xik)\odot f(Xj\ell)=(f(Xij)\odot f(Xk\ell))\oplus(f(Xi\ell)\odot f(Xjk)).
    \end{equation}

 \medskip
\noindent \textbf{3.} As one more illustration of the method, let us consider
one particular case for $p=3$ and $q=2$. Put $\Ascr:=\{135\}$ and
$\Ascr':=\{234,125,145\}$. One can check that $\Mscr(234)=\{\{12,45\}\}$,
~$\Mscr(125):=\{\{14,23\},\{23,45\}\}$, ~$\Mscr(145)=\{\{12,34\},\{25,34\}\}$,
and that $\Mscr(135)$ consists just of the five matchings occurring in those
three collections. Therefore, $\Ascr,\Ascr'$ are balanced. The corresponding
configurations and bijection are shown in the picture.

 \begin{center}
  \unitlength=1mm
  \begin{picture}(140,47)
\multiput(20,5)(0,10){5}%
  {\put(0,0){\circle{2}}
  \put(8,0){\circle*{2}}
  \put(16,0){\circle{2}}
  \put(24,0){\circle*{2}}
  \put(32,0){\circle{2}}}
\multiput(87,5)(0,10){2}%
  {\put(0,0){\circle{2}}
  \put(8,0){\circle*{2}}
  \put(16,0){\circle*{2}}
  \put(24,0){\circle{2}}
  \put(32,0){\circle{2}}}
\multiput(87,25)(0,10){2}%
  {\put(0,0){\circle{2}}
  \put(8,0){\circle{2}}
  \put(16,0){\circle*{2}}
  \put(24,0){\circle*{2}}
  \put(32,0){\circle{2}}}
\multiput(87,45)(0,10){1}%
  {\put(0,0){\circle*{2}}
  \put(8,0){\circle{2}}
  \put(16,0){\circle{2}}
  \put(24,0){\circle{2}}
  \put(32,0){\circle*{2}}}
  \multiput(60,5)(0,10){5}%
{\put(0,0){\line(1,0){20}}
  \put(0,0){\line(2,1){4}}
  \put(0,0){\line(2,-1){4}}
  \put(20,0){\line(-2,1){4}}
  \put(20,0){\line(-2,-1){4}}}
  \multiput(36,5)(67,0){2}%
{\qbezier(0,0)(4,3)(8,0)
  \put(3,0.5){x}}
  \multiput(20,15)(67,0){2}%
{\qbezier(0,0)(4,3)(8,0)}
  \multiput(36,15)(67,0){2}%
{\qbezier(0,0)(4,3)(8,0)
  \put(3,0.5){x}}
  \multiput(28,25)(67,0){2}%
{\qbezier(0,0)(4,3)(8,0)
  \put(3,0.5){x}}
  \multiput(44,25)(67,0){2}%
{\qbezier(0,0)(4,3)(8,0)}
  \multiput(28,35)(67,0){2}%
{\qbezier(0,0)(4,3)(8,0)
  \put(3,0.5){x}}
  \multiput(20,45)(67,0){2}%
{\qbezier(0,0)(4,3)(8,0)
  \put(3,0.5){x}}
  \multiput(44,45)(67,0){2}%
{\qbezier(0,0)(4,3)(8,0)
  \put(3,0.5){x}}
  \multiput(28,5)(67,0){2}%
{\qbezier(0,0)(12,8)(24,0)}
  \multiput(20,35)(67,0){2}%
{\qbezier(0,0)(12,8)(24,0)}
  \multiput(-1,0)(67,0){2}%
  {\put(20,0){1}
  \put(28,0){2}
  \put(36,0){3}
  \put(44,0){4}
  \put(52,0){5}}
  \put(15,3){\line(0,1){44}}
  \put(-2,23){$A=135$}
  \put(127,3){\line(0,1){14}}
  \put(130,9){$A'=145$}
  \put(127,23){\line(0,1){14}}
  \put(130,29){$A'=125$}
  \put(127,43){\line(0,1){5}}
  \put(130,44){$A'=234$}
  \end{picture}
   \end{center}

This implies a particular sq-relation on quintuples: for $i<j<k<\ell<m$ and
$X\subseteq [n]-\{i,j,k,\ell,m\}$, one holds:
   \begin{multline} \label{eq:SP5}
    f(Xikm)\odot f(Xj\ell)
    =(f(Xjk\ell)\odot f(Xim))\oplus(f(Xijm)\odot f(Xk\ell))\\
      \oplus(f(Xi\ell m)\odot f(Xjk)).\quad
    \end{multline}

 \medskip
 \noindent \textbf{4.}
Next we describe a wide class of balanced collections for arbitrary $p\ge q$;
it includes the collections indicated in items 1 and 2 as very special cases.

The collection $\Ascr$ that we are going to construct contains two
distinguished sets $B_0,B_1$. The set $B_0$ is the interval $[p]$. Then $\bar
B_0=[p+1..p+q]$ and $\Mscr(B_0)$ consists of a unique matching $M_0$: its arcs
are the pairs $\pi_i:=(p-i+1,p+i)$ for $i=1,\ldots,q$. The set $B_1$ is
obtained by applying to $B_0$ the exchange operation w.r.t. a chosen nonempty
subset $\Pi_0\subseteq M_0$, i.e. $B_1=L\cup R$, where
   \begin{equation} \label{eq:LR}
  L:=[p-q]\cup\{p-i+1\colon \pi_i\not\in\Pi_0\}\quad \mbox{and} \quad
   R:=\{p+i\colon \pi_i\in\Pi_0\}.
   \end{equation}
An example for $p=5,q=4$ is drawn in the picture where the arcs in $\Pi_0$ are
marked with crosses.

 \begin{center}
  \unitlength=1mm
  \begin{picture}(140,22)
   \put(0,3){\begin{picture}(57,20)
  \put(0,5){\circle{1.5}}
  \put(7,5){\circle{1.5}}
  \put(14,5){\circle{1.5}}
  \put(21,5){\circle{1.5}}
  \put(28,5){\circle{1.5}}
  \put(35,5){\circle*{1.5}}
  \put(42,5){\circle*{1.5}}
  \put(49,5){\circle*{1.5}}
  \put(56,5){\circle*{1.5}}
  \qbezier(7,5)(31.5,21)(56,5)
  \qbezier(14,5)(31.5,16)(49,5)
  \put(30.5,9.5){x}
  \qbezier(21,5)(31.5,11)(42,5)
  \qbezier(28,5)(31.5,7)(35,5)
  \put(30.5,5){x}
  \put(25,-2){$(B_0,\bar B_0)$}
  \end{picture}}
   \put(80,3){\begin{picture}(56,20)
  \put(0,5){\circle{1.5}}
  \put(7,5){\circle{1.5}}
  \put(14,5){\circle*{1.5}}
  \put(21,5){\circle{1.5}}
  \put(28,5){\circle*{1.5}}
  \put(35,5){\circle{1.5}}
  \put(42,5){\circle*{1.5}}
  \put(49,5){\circle{1.5}}
  \put(56,5){\circle*{1.5}}
  \qbezier(14,5)(31.5,16)(49,5)
  \put(30.5,9.5){x}
  \qbezier(28,5)(31.5,7)(35,5)
  \put(30.5,5){x}
  \put(25,-2){$(B_1,\bar B_1)$}
  \end{picture}}
  \end{picture}
   \end{center}

The other members of $\Ascr\cup\Ascr'$ have the same tail part $R$ as the set
$B_1$. More precisely, take the collection
   $$
   \Bscr:=\{A\subseteq[p+q] \colon |A|=p,\; A\cap[p+1..p+q]=R\}.
   $$
For a subset $A\subseteq[n]$, define $\Sigma(A):=\sum(i\in A)$. Now put
   \begin{gather}
   \Ascr:=\{B_0\}\cup\{ A\in\Bscr\colon\Sigma(A)-\Sigma(B_1)\;\; \mbox{odd}\}
   \quad\mbox{and} \nonumber \\
   \Ascr':=\{ A\in\Bscr\colon\Sigma(A)-\Sigma(B_1)\;\; \mbox{even}\}.
   \label{eq:AA4}
   \end{gather}
In particular, $B_0\in\Ascr$, $B_1\in\Ascr'$, and $\Ascr\cap\Ascr'=\emptyset$.
   \begin{lemma} \label{lm:AA4}
The pair $\Ascr,\Ascr'$ as in~\refeq{AA4} is balanced.
  \end{lemma}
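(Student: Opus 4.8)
The plan is to exhibit an explicit bijection between $\Kscr(\Ascr)$ and $\Kscr(\Ascr')$ sending $(A,M)$ to $(A',M)$, thereby verifying the definition of balanced directly. The whole collection $\Bscr$ has the fixed tail $R$ on the last $q$ coordinates, so every $A\in\Bscr$ is determined by its "head" $A\cap[p]$, a $(p-|R|)$-element subset of $[p]$; feasible matchings $M$ for such an $A$ live entirely inside a structure that ignores the positions of $R$. The first step is to understand $\Mscr(A)$ for $A\in\Bscr$ and for $A=B_0$, and to observe two facts: (1) for $A\in\Bscr$, the arcs of any $M\in\Mscr(A)$ pairing a white and a black element, together with the nested/non-covering conditions~\refeq{nest}, force a very rigid "balanced-parenthesis" picture on $[p+q]$; (2) the exchange operation on a configuration $(A,M)$ changes $\Sigma(A)$ by $\sum_{\pi\in\Pi}(\text{larger}-\text{smaller})$, and one checks this is always odd for a single arc $\pi\in M$ when $A\in\Bscr$ (or, more to the point, one tracks the parity of $\Sigma(A)-\Sigma(B_1)$).

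Next I would handle the two natural parts. For matchings $M$ with $M\ne M_0$: I claim $M$ is feasible for exactly one $A\in\Bscr$ with $\Sigma(A)-\Sigma(B_1)$ odd and exactly one with it even, and that for such $M$ no configuration $(B_0,M)$ occurs (i.e. $M\notin\Mscr(B_0)$ unless $M=M_0$). Indeed a feasible matching $M$ for $B_0$ must be nested with each arc separating a white from a black element of $[p]$ vs $[p+1..p+q]$, and~\refeq{nest}(iii) (no free element covered) pins $M$ down to $M_0$. So for $M\ne M_0$ the count of occurrences of $M$ in $\Mscr(\Ascr)$ equals the count in $\{A\in\Bscr:\Sigma(A)-\Sigma(B_1)\text{ odd}\}$ and similarly for $\Ascr'$; these two counts agree because the set of $A\in\Bscr$ with $M\in\Mscr(A)$ is a single "exchange class" (all obtained from one another by exchange operations along subsets of $M$), and on such a class the map $A\mapsto A\triangle\pi$ for a fixed $\pi\in M$ is a parity-reversing involution, giving equinumerous odd and even halves. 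For $M=M_0$: here $\Mscr(B_0)\ni M_0$ contributes one configuration $(B_0,M_0)$ to $\Kscr(\Ascr)$; on the other side $M_0$ is feasible precisely for those $A$ obtained from $B_0$ by exchanges along subsets of $M_0$, i.e. for $A\in\Bscr$ whose head is of the form $L$ with the remaining white $p-i+1$'s and black $p+i$'s as in~\refeq{LR} — one must check the odd ones (inside $\Ascr$) number exactly one fewer than the even ones (inside $\Ascr'$), the deficit being exactly compensated by $B_0$.

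The key technical point — and the step I expect to be the main obstacle — is the parity bookkeeping: showing that on each exchange class $\{A : M\in\Mscr(A)\}$ the partition into "$\Sigma(A)-\Sigma(B_1)$ odd" and "even" is a perfect $2$-coloring, with the single exception of the $M_0$-class where the presence of the seed $B_0\in\Ascr$ restores the balance. This hinges on the observation that for every arc $\pi=(a,b)$ appearing in a feasible matching for some $A\in\Bscr\cup\{B_0\}$, the difference $b-a$ is odd: by~\refeq{nest} the interval $[\pi]$ has even length, since it is partitioned into the (proper) sub-arcs nested inside it plus the two endpoints, and each sub-arc again spans an even-length interval — an easy induction on nesting depth. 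Granting that, a single exchange flips the parity of $\Sigma(A)$, every exchange class is a Boolean cube $2^{M}$ acted on by coordinate flips, hence splits evenly into the two parities; and the asymmetry introduced by insisting $B_0\in\Ascr$ rather than following the parity rule is exactly the off-by-one that makes $M_0$ balance. Assembling the per-$M$ equalities over all $M$ gives $\Mscr(\Ascr)=\Mscr(\Ascr')$, which is the assertion. I would close by noting this also confirms the earlier-claimed $B_0\in\Ascr$, $B_1\in\Ascr'$, $\Ascr\cap\Ascr'=\emptyset$, consistent with the construction.
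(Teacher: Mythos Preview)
Your overall strategy---count, for each matching $M$, the occurrences of $M$ on each side---is sound, and your parity observation (that $b-a$ is odd for every arc $(a,b)$ in a feasible matching) is correct and useful. But there is a genuine gap at the heart of the argument.

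You claim that for $M\ne M_0$ the set $\{A\in\Bscr : M\in\Mscr(A)\}$ is ``a single exchange class (all obtained from one another by exchange operations along subsets of $M$)'' and that $A\mapsto A\triangle\pi$ for \emph{any} fixed $\pi\in M$ is a parity-reversing involution on it. This is false as stated: exchanging along an arc $\pi$ having an endpoint in $[p{+}1..p{+}q]$ alters $A\cap[p{+}1..p{+}q]$ and hence throws $A$ out of $\Bscr$. The set in question is only a sub-cube $2^{M'}$ where $M':=\{\pi\in M:\pi\subset[p]\}$, and your involution works only for $\pi\in M'$. So you must show $M'\ne\emptyset$ whenever $M\ne M_0$ and $M$ is feasible for some $A\in\Bscr$. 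You have not addressed this, and it is the crux. (A quick way to see it: if every arc of $M$ meets $[p{+}1..p{+}q]$, then by a count each arc meets it exactly once, hence every arc straddles the cut $p\,|\,p{+}1$; nestedness then forces the arcs to be concentrically nested about this cut, and condition~\refeq{nest}(iii) pins them down to $M_0$.) Your handling of $M=M_0$ is also imprecise: in fact $B_1$ is the \emph{unique} element of $\Bscr$ admitting $M_0$ (since $B_0\triangle(\cup\Pi)$ has tail $R$ only when $\Pi=\Pi_0$), so the balance there is simply $(B_0,M_0)\leftrightarrow(B_1,M_0)$, not a ``one fewer'' count over a larger class.

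Once this gap is filled your argument essentially \emph{is} the paper's proof, just phrased as a count rather than an explicit bijection. The paper orders the arcs $(i_k,j_k)$ by $j_1<\cdots<j_q$, observes that the arc with smallest $j$ is short ($j_1=i_1+1$), and shows that $j_1>p$ forces $M=M_0$ and $A=B_1$; otherwise $\rho_1\subset[p]$ and the bijection sends $(A,M)$ to $(A\triangle\rho_1,M)$. That $\rho_1$ is exactly the ``fixed $\pi\in M'$'' you need but did not produce.
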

  \begin{proof}
~Consider a set $A\in\Ascr\cup\Ascr'$ and a matching $M\in\Mscr(A)$. We
describe a rule which associates to $(A,M)$ another configuration $(A',M)$
(aiming to obtain a bijection between $\Kscr(\Ascr)$ and $\Kscr(\Ascr')$). Two
cases are possible. \smallskip

\noindent\emph{Case 1}: $A=B_0$. Then $M=M_0$, and the configuration
$(B_0,M_0)$ belongs to $\Kscr(\Ascr)$. We naturally associate to $(B_0,M_0)$
the configuration $(B_1,M_0)$ in $\Kscr(\Ascr')$. (Note that $B_0$ and $B_1$
are linked by the exchange operation w.r.t. the set $\Pi_0\subseteq M_0$.)
\smallskip

\noindent\emph{Case 2}: $A\ne B_0$. Then $A\in\Bscr$. Suppose $M$ consists of
arcs $\rho_k=i_kj_k$, $k=1,\ldots,q$, and let $j_1<j_2<\ldots<j_q$. Let us say
that an arc $\rho_k$ is \emph{short} if $j_k=i_k+1$. It is immediate
from~\refeq{nest} that the interval of any arc $\rho_{k'}$ contains a short arc
$\rho_k$, i.e. $i_{k'}\le i_k<j_k\le j_{k'}$. This implies that the arc
$\rho_1$ is short, in view of $j_1<\ldots<j_q$. Consider two
subcases.\smallskip

\noindent\emph{Subcase 2a}: $j_1\ge p+1$. This is possible only if $j_k=p+k$
for all $k=1,\ldots,q$. Then $[\rho_1]\subset[\rho_2]\subset\ldots
\subset[\rho_q]$, implying $i_k=p+1-k$ (in view of~\refeq{nest}(ii),(iii)).
Therefore, $M$ coincides with $M_0$, and now the condition $A\cap[p+1..p+q]=R$
implies that $A$ coincides with the set $B_1$. So $(A,M)$ is the configuration
$(B_1,M_0)$ in $\Kscr(\Ascr')$, and we associate to it the configuration
$(B_0,M_0)$, to be agreeable with Case~1. \smallskip

\noindent\emph{Subcase 2b}: $j_1\le p$. We associate to $(A,M)$ the
configuration $(A',M)$ with $A':=A\triangle\rho_1$ (i.e. we apply to $(A,M)$
the exchange operation w.r.t. the subset of $M$ formed by the singleton
$\{\rho_1\}$). Obviously, $A'\cap[p+1..p+q]=R$, whence $A'\in\Bscr$. Also
$j_1-i_1=1$ implies that $\Sigma(A)-\Sigma(A')$ is odd. Thus, one of
$(A,M),(A',M)$ belongs to $\Kscr(\Ascr)$ and the other to $\Kscr(\Ascr')$. We
associate these configurations to each other (taking into account that the same
short arc $\rho_1$ in $M$ is chosen in both cases).
  \end{proof}

\noindent \textbf{Remark 2.} ~(i) When $p=2$ and $q=1$, we have $B_0=12$, $\bar
B_0=3$ and $M_0=\{23\}$. Taking $\Pi_0=M_0$, we obtain $B_1=13$ and
$\Bscr=\{13,23\}$. This gives $\Ascr=\{12,23\}$ and $\Ascr'=\{13\}$, which
matches the balanced collections described in item~1. (ii) When $p=q=2$, we
have $B_0=12$, $\bar B_0=34$ and $M_0=\{23,14\}$. Taking $\Pi_0=\{23\}$, we
obtain $B_1=13$ and $\Bscr=\{13,23\}$. This gives $\Ascr=\{12,23\}$ and
$\Ascr'=\{13\}$, which is equivalent to the balanced collections $\{13\}$,
$\{12,14\}$ in item~2. \medskip

Pairs $\Ascr,\Ascr'$ as in~\refeq{AA4} give rise to sq-relations on
SFG-functions which are viewed as follows. Let $Y\subseteq[n]$ consist of
elements $i_1<\ldots<i_p<j_1<\ldots<j_q$ and let $X\subseteq[n]-Y$. Put $I:=
\{i_1,\ldots,i_p\}$ and $J:=\{j_1,\ldots,j_q\}$ and choose a subset $R\subseteq
J$ (which corresponds to $R$ in~\refeq{LR}). Then the corresponding sq-relation
is:
  \begin{multline} \label{eq:Gr_pluck}
  (f(X\cup I)\odot f(X\cup J))\oplus \bigoplus\nolimits_{I'\in\Iscr^{\rm odd}}
  f(X\cup (I-I')\cup R)    \odot f(X\cup I'\cup(J-R)     \\
  =\bigoplus\nolimits_{I''\in\Iscr^{\rm even}} f(X\cup (I-I'')\cup R)
    \odot f(X\cup I''\cup(J-R).
  \end{multline}
Here the collection $\Iscr^{\rm even}$ (resp. $\Iscr^{\rm odd}$) is formed by
the subsets $\tilde I\subseteq I$ such that $|\tilde I|=|R|$ and the integers
$\sum(k\colon j_k\in R)$ and $\sum(p+1-k\colon i_k\in\tilde I)$ have the same
(resp. different) parity.

When $p=q$ and $\mathfrak{S}=\mathbb{C}$, relations similar to~\refeq{Gr_pluck}
appear in a characterization of the Grassmannian $G_{d,n}$. In this case one
should take all possible tuples $p,Y,X,R$ such that $2\le p\le d$, $|Y|=2p$,
$|X|=d-p$ and $1\le|R|\le p$. Then the corresponding counterparts
of~\refeq{Gr_pluck} involving such tuples give a basis for the homogeneous
co-ordinate ring of $G_{d,n}$ related to the Pl\"ucker embedding of $G_{d,n}$
into $\mathbb{P}(\wedge^d \mathbb{C}^n)$; cf.~\cite{Fu}.

 \medskip
 \noindent \textbf{5.}
One more representable class of balanced collections for arbitrary $p\ge q$ is
obtained by slightly modifying the previous construction.

Fix a subset $Q\subseteq [p+2..p+q]$ and form the collection $\Cscr:=\{A\in
\binom{[p+q]}{p}\colon A\cap[p+2..p+q]=Q\}$. We partition $\Cscr$ into two
subcollections
   \begin{equation} \label{eq:AA5}
\Ascr:=\{A\in\Cscr\colon \Sigma(A)\;\;\mbox{odd}\}\quad \mbox{and}\quad
 \Ascr':=\{A\in\Cscr\colon \Sigma(A)\;\;\mbox{even}\}.
   \end{equation}

 \begin{lemma} \label{lm:AA5}
The pair $\Ascr,\Ascr'$ as in~\refeq{AA5} is balanced.
  \end{lemma}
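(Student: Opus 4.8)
The plan is to follow the scheme of the proof of Lemma~\ref{lm:AA4}, toggling a configuration across the ``shortest'' arc of its matching, but the present constraint set $[p+2..p+q]$ makes the argument strictly easier, with no exceptional configuration to treat separately. So let $(A,M)$ be a configuration with $A\in\Cscr$ and $M\in\Mscr(A)$, and write the arcs of $M$ as $\rho_1,\ldots,\rho_q$ with right endpoints $j_1<j_2<\cdots<j_q$. Exactly as in the proof of Lemma~\ref{lm:AA4}, the nestedness conditions~\refeq{nest}(ii),(iii) force the interval of every arc to contain a short arc, whence $\rho_1$ itself is short: $\rho_1=(i_1,i_1+1)$ with $j_1=i_1+1$.

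The one new point is the observation that $j_1\le p+1$, so that $i_1\le p$ and the interval $[\rho_1]=\{i_1,i_1+1\}$ is disjoint from the frozen block $[p+2..p+q]$. Indeed, $j_1<\cdots<j_q$ are $q$ distinct integers; were they all $\ge p+2$, they would lie in $[p+2..p+q]$, a set of only $q-1$ elements --- impossible. (This is exactly where the present construction departs from that of Lemma~\ref{lm:AA4}: there the frozen block $[p+1..p+q]$ has $q$ elements, so the shortest arc may touch it, which is why the earlier proof has to treat $B_0,M_0$ apart; here that case simply does not occur.)

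Now associate to $(A,M)$ the configuration $(A',M)$ produced by the exchange operation with respect to $\{\rho_1\}\subseteq M$, i.e. $A':=A\triangle\rho_1$. Since $|\rho_1\cap A|=|\rho_1\cap\bar A|=1$, we get $|A'|=p$; since $[\rho_1]$ avoids $[p+2..p+q]$, we get $A'\cap[p+2..p+q]=A\cap[p+2..p+q]=Q$, hence $A'\in\Cscr$; and, as noted for the exchange operation in Section~\ref{sec:balan}, $M\in\Mscr(A')$. As $\rho_1$ is short, $\Sigma(A')=\Sigma(A)\pm1$, so $\Sigma(A)$ and $\Sigma(A')$ have opposite parities and exactly one of $(A,M),(A',M)$ lies in $\Kscr(\Ascr)$, the other in $\Kscr(\Ascr')$. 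Finally, the rule $(A,M)\mapsto(A',M)$ is an involution of $\Kscr(\Cscr)$: the arc $\rho_1$ is determined by $M$ alone (smallest right endpoint), and shortness depends only on positions and not on $A$, so reapplying the rule to $(A',M)$ returns $(A'\triangle\rho_1,M)=(A,M)$. This involution interchanges $\Kscr(\Ascr)$ and $\Kscr(\Ascr')$ and keeps the matching $M$ fixed, which is precisely the definition of $\Ascr,\Ascr'$ being balanced. The only substantive step is the counting inequality $j_1\le p+1$; everything else is bookkeeping already prepared in Sections~\ref{sec:flow}--\ref{sec:balan}.
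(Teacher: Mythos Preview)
Your proof is correct and follows essentially the same approach as the paper: pick a canonical short arc of $M$ that avoids the frozen block, exchange on it to flip parity, and observe the rule depends only on $M$. The only cosmetic difference is that you select the arc with smallest right endpoint and then argue it is short, whereas the paper directly takes the short arc with smallest left endpoint; these two choices coincide, and your counting bound $j_1\le p+1$ is exactly the paper's observation that not all right endpoints can exceed $p+1$.
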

  \begin{proof}
~Let $A\in\Cscr$ and $M\in\Mscr(A)$. Take the short arc $(i,i+1)\in M$ with $i$
minimum. We assert that $i\le p$. For otherwise any arc $i'j'\in M$ would
satisfy $j'>p+1$ (by the argument as in the proof of Lemma~\ref{lm:AA4}), which
is impossible since $|M|=q$ and $(p+q)-(p+1)<q$.

Thus, the set $A':=A\triangle\{i,i+1\}$ belongs to $\Cscr$ as well.
Furthermore, $A,A'$ belong to different collections among $\Ascr,\Ascr'$.
Associating such $(A,M),(A',M)$ to each other, we obtain the desired bijection
between $\Kscr(\Ascr)$ and $\Kscr(\Ascr')$.
  \end{proof}

This lemma gives rise to the corresponding class of sq-relations; we omit it
here.

 \medskip
 \noindent \textbf{6.}
Our last illustration to the method concerns sq-relations analogous to ones
yielding a Gr\"obner basis for the ideal ${\rm ker}(\psi)$ mentioned in the
Introduction (cf.~\cite[Sec.~14.2]{MS}).

We identify a subset of $[n]$ with the sequence of its elements in the
increasing order and consider the known partial order on $2^{[n]}$ in which for
subsets $A=(a_1<\ldots<a_p)$ and $B=(b_1<\ldots<b_q)$, one puts $A\prec B$ if
$p\ge q$ and $a_i\le b_i$ for $i=1,\ldots,q$.

For $p,q$ as before (i.e. $p\ge q$ and $p+q\le n$), take a set
$B=(b_1<\ldots<b_p)\in\binom{[p+q]}{p}$ incomparable with its complement $\bar
B=(\bar b_1< \ldots<\bar b_q)$. Then there is $d\le q$ such that $b_d>\bar b_d$
(usually one takes the smallest $d$ with this property, but this is not
important for us). Using this $d$, we partition each of $B,\bar B$ into two
subsets (where $\Bleft$ or ${\bar B}^{\rm right}$ may be empty):
  $$
  \Bleft:=\{b_1,\ldots,b_{d-1}\}, \qquad \Bright:=\{b_d,\ldots,b_p\},
  $$
  $$
 {\bar B}^{\rm left}:=\{\bar b_1,\ldots,\bar b_{d}\}, \qquad
   {\bar B}^{\rm right}:=\{b_{d+1},\ldots,\bar b_p\},
   $$
and form the set
  $$
  C:={\bar B}^{\rm left} \cup \Bright.
  $$

By the choice of $d$, $C$ begins with $d$ black elements and ends with $p-d+1$
white elements (thinking of elements of $B$ and $\bar B$ as white and black,
respectively). Introduce the following collection of $p$-element subsets of
$[p+q]$:
  $$
  \Bscr:=\{\Bleft\cup Z\colon~ Z\subset C,~ |Z|=p-d+1\}.
  $$

Then the complement $\bar A$ of any member $A=\Bleft\cup Z$ of $\Bscr$ is the
$q$-element set $(C-Z)\cup{\bar B}^{\rm right}$. In other words, for each
$A\in\Bscr$, the pair $(A,\bar A)$ is obtained from $(B,\bar B)$ by swapping
$\delta\le \min\{d,p-d+1\}$ black and white elements in the parts ${\bar
B}^{\rm left}$ and $\Bright$ of $C$, respectively. We partition $\Bscr$ into
two collections:
  \begin{equation} \label{eq:grebn}
  \Ascr:=\{A\in\Bscr\colon~ \Sigma(A)~\mbox{odd}\}\quad \mbox\quad
 \Ascr':=\{A\in\Bscr\colon~ \Sigma(A)~\mbox{even}\}.
   \end{equation}
In particular, the set $B$ belongs to one of these collections (and is the
greatest element in the poset $(\Bscr,\prec)$).

 \begin{lemma} \label{lm:AA6}
The pair $\Ascr,\Ascr'$ as in~\refeq{grebn} is balanced.
  \end{lemma}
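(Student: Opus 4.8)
The plan is to establish a bijection between $\Kscr(\Ascr)$ and $\Kscr(\Ascr')$ by exhibiting, for each configuration $(A,M)$ with $A\in\Bscr$ and $M\in\Mscr(A)$, a canonical short arc of $M$ whose exchange flips the parity of $\Sigma(A)$ while keeping the resulting set inside $\Bscr$. This mimics the proof of Lemma~\ref{lm:AA5} (and the Subcase~2b part of Lemma~\ref{lm:AA4}): the exchange operation w.r.t.\ a single short arc $\{i,i+1\}$ always changes $\Sigma$ by an odd amount, so if we can show membership in $\Bscr$ is preserved, then $(A,M)$ and $(A',M)$ lie in opposite collections and pairing them up gives the desired bijection (it is an involution, hence bijective, once well-definedness is checked).

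First I would recall, exactly as in Lemma~\ref{lm:AA4}, that the nestedness conditions~\refeq{nest} force the arc $\rho_1$ of $M$ with the smallest right endpoint to be short, and more generally that among all short arcs of $M$ there is a well-defined \emph{leftmost} one, say $\{i,i+1\}$. Unlike in Lemma~\ref{lm:AA5}, here I cannot simply bound $i\le p$ from a counting argument, because $\Bscr$ is defined not by fixing a tail but by the structure of $C=\bar B^{\rm left}\cup\Bright$: the members of $\Bscr$ are obtained from $(B,\bar B)$ by swapping some $\delta$ white/black elements \emph{within the positions occupied by $C$}, while the part $\Bleft$ (positions $b_1,\ldots,b_{d-1}$) and the part $\bar B^{\rm right}$ stay untouched. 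So the real task is to show that the leftmost short arc $\{i,i+1\}$ of any $M\in\Mscr(A)$, $A\in\Bscr$, has both endpoints lying in the index set $[p+q]$ occupied by $C$ (equivalently: $i$ and $i+1$ are neither among the first $d-1$ coordinates nor among the last coordinates forced to be in $\bar B^{\rm right}$), so that toggling the colours of positions $i,i+1$ keeps $A$ of the form $\Bleft\cup Z$ with $|Z|=p-d+1$.

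The key step is therefore a planarity/ordering argument about where a short arc can sit. Since $\Bleft$ consists entirely of white elements and, by the choice of $d$, all of $\Bleft$ precedes all of $C$ which precedes all of $\bar B^{\rm right}$ (black), and an arc $\pi\in M$ satisfies $|\pi\cap A|=|\pi\cap\bar A|=1$ by~\refeq{nest}(i), an arc with both endpoints in the $\Bleft$-block would be monochromatic white --- impossible; similarly one with both endpoints in the $\bar B^{\rm right}$-block would be monochromatic black. An arc straddling two blocks cannot be short because $C$ is nonempty and sits strictly between them (so consecutive indices $i,i+1$ cannot bridge $\Bleft$ and $\bar B^{\rm right}$, and an arc from $\Bleft$ into $C$ or from $C$ into $\bar B^{\rm right}$ of length one would have to be the short arc, which I must rule out). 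Here the content is: if $i\in\Bleft$-positions and $i+1\in C$-positions then $i+1$ is the first index of $C$; by nestedness every arc of $M$ nests over a short arc, so this short arc $\{i,i+1\}$ would have to be nested inside some arc, forcing that larger arc to have an endpoint $\le i$, i.e.\ in $\Bleft$, but then that arc is white at its left end and must be black at its right end, which by the block structure puts its right end in $\bar B^{\rm right}$ --- and then it cannot be nested, contradicting~\refeq{nest}(ii). A symmetric argument handles the $C$/$\bar B^{\rm right}$ boundary. I expect this case analysis --- pinning down that the leftmost short arc is genuinely interior to the $C$-block --- to be the main obstacle; once it is done, $A':=A\triangle\{i,i+1\}$ visibly again has the form $\Bleft\cup Z'$ with $Z'\subset C$, $|Z'|=p-d+1$, so $A'\in\Bscr$, the parity of $\Sigma$ flips, and the map $(A,M)\mapsto(A',M)$ is the required involution exchanging $\Kscr(\Ascr)$ and $\Kscr(\Ascr')$, proving that $\Ascr,\Ascr'$ are balanced.
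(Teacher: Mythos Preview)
Your plan has a real error: the assertion that ``by the choice of $d$, all of $\Bleft$ precedes all of $C$ which precedes all of $\bar B^{\rm right}$'' is false. These three sets are subsets of $[p+q]$, not blocks of consecutive positions, and nothing in the definition forces them to be ordered that way. A small counterexample: take $p=q=3$, $B=\{2,4,5\}$, $\bar B=\{1,3,6\}$, and $d=1$ (since $b_1=2>1=\bar b_1$). Then $\Bleft=\emptyset$, $C=\{1,2,4,5\}$, $\bar B^{\rm right}=\{3,6\}$; the element $3\in\bar B^{\rm right}$ sits in the middle of $C$. For $A=\{1,2,4\}\in\Bscr$ the matching $M=\{(1,6),(2,5),(3,4)\}$ is feasible, and its unique short arc is $(3,4)$, whose endpoint $3$ is \emph{not} in $C$. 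Exchanging along it yields $A'=\{1,2,3\}\notin\Bscr$, so your rule does not stay inside $\Bscr$.

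The paper avoids this by not insisting on a short arc at all. A one-line pigeonhole shows that some arc of $M$ must have \emph{both} endpoints in $C$: the complement $[p+q]-C=\Bleft\cup\bar B^{\rm right}$ has only $(d-1)+(q-d)=q-1$ elements, so at most $q-1$ of the $q$ disjoint arcs can touch it. One then picks among arcs entirely in $C$ the one with smallest right endpoint $j$; the interval $[i..j]$ is a union of arcs of $M$ by~\refeq{nest}, so $j-i$ is odd, and the exchange flips parity while manifestly preserving the form $\Bleft\cup Z$. Your involution idea is right, but the canonical arc must be chosen via this counting argument rather than by ``leftmost short''.
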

  \begin{proof}
~Consider $A\in\Bscr$ and $M\in\Mscr(B)$. The set $C$ contains exactly $d$
elements of $\bar A$, and the set $[p+q]-C$ contains $d-1$ elements of $A$
(those forming $\Bleft$). Also $|M|=|\bar A|=q$.

Therefore, $M$ contains at least one arc $(i,j)$ with both ends $i,j$ in $C$.
We take such an arc $(i,j)$ with $j$ minimum and associate to $(A,M)$ the
configuration $(A',M)$ with $A':=A\triangle \{i,j\}$. Then $A'\in\Bscr$.
Moreover, the fact that $j-i$ is odd (since the interval $[i..j]$ is
partitioned into arcs in $M$) implies that $\Sigma(A)-\Sigma(A')$ is odd,
whence $A$ and $A'$ belong to different collections among $\Ascr,\Ascr'$.
Finally, by the canonical choice of $(i,j)$, the configuration associated to
$(B',M)$ is just $(B,M)$.
  \end{proof}


\section{\Large Necessity of the balancedness}  \label{sec:(i)-(ii)}

In this section we prove the other direction in Theorem~\ref{tm:main}. In fact,
a sharper property takes place, saying that for non-balanced $\Ascr,\Ascr'$,
the corresponding quadratic relation is not valid in a very special case.

  \begin{prop} \label{eq:sq-balance}
Let $\Ascr,\Ascr'\subseteq\binom{[p+q]}{p}$ be not balanced.
Then~\refeq{S_pluck} with these $\Ascr,\Ascr'$ is violated for some $(G,w)$
already in case $\mathfrak{S}=\Rset$ and $n=p+q$ (and hence $X=\emptyset$ and
$Y=[p+q]$). More precisely, there exist a planar network $G=(V,E)$ with $p+q$
sources and a weighting $w:V\to\Rset$ such that the flow-generated function
$f=f_w$ on $2^{[p+q]}$ determined by $w$ gives
  \begin{equation} \label{eq:non-balan}
  \sum\nolimits_{A\in\Ascr} f(A)f(\bar A) \ne
      \sum\nolimits_{A\in\Ascr'} f(A)f(\bar A).
  \end{equation}
  \end{prop}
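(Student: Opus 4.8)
The plan is to prove the contrapositive by an explicit construction. Suppose $\Ascr,\Ascr'$ are not balanced, i.e.\ $\Mscr(\Ascr)\ne\Mscr(\Ascr')$ as multisets of matchings in $[p+q]$. First I would pick, among all matchings distinguishing the two multisets, one that is \emph{extremal} in a suitable sense — concretely, a matching $M^\ast$ with $|\{A\in\Ascr:M^\ast\in\Mscr(A)\}|\ne|\{A'\in\Ascr':M^\ast\in\Mscr(A')\}|$ and with a maximal number of arcs (equivalently, with the fewest free elements, so $|M^\ast|$ is as large as possible; if several, one whose arcs are ``innermost''). The point of extremality is that any strictly finer/larger matching $M$ occurs equally often on both sides, so only the ``top'' double flows detected by $M^\ast$ will matter.

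Next I would build the network. The idea, inherited from Section~\ref{sec:flow}, is to design $G$ so that the only double flows $\xi$ that arise have $M(\xi)=M^\ast$ and moreover have no extra circuits, i.e.\ $d(\xi)=0$, so that $N_{I(A),J(A)}(\xi)=1$ for every relevant $A$. A natural candidate is a planar graph in which the arcs of $M^\ast$ correspond to $q$ forced ``nested bridges'': for each arc $\pi=(i,j)\in M^\ast$ there is an essentially unique path joining $\hat s_{\gamma(i)}$ and $\hat s_{\gamma(j)}$, the free sources are routed by unique paths to the free sinks, and there is exactly one flow through each terminal otherwise. One concrete way is to take the half-grid $\Gamma_{p+q}$ (or a small planar gadget) and assign the weighting $w$ so that all vertices get weight $\underline 1$ except that one chosen ``separating'' vertex $v_0$ gets an indeterminate weight $t$ (or weight $0$) that kills every flow family whose symmetric-difference pattern is not exactly $M^\ast$. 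Then for each $A$, $f(A)f(\bar A)$ becomes, by \refeq{ff-zeta}, a sum over double flows $\xi$, and after the specialization only the summand with $M(\xi)=M^\ast$ and $d(\xi)=0$ survives, contributing $1$ if $M^\ast\in\Mscr(A)$ (by the argument in \refeq{nest}, realizability of $M^\ast$ as $M(\xi)$ for $I(A),J(A)$ is exactly the condition $A\in$ the set of sets admitting $M^\ast$) and $0$ otherwise. Hence the left side of \refeq{non-balan} evaluates to $|\{A\in\Ascr:M^\ast\in\Mscr(A)\}|$ and the right side to $|\{A'\in\Ascr':M^\ast\in\Mscr(A')\}|$, which differ by the choice of $M^\ast$.

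The main obstacle is the construction of the network that ``isolates'' $M^\ast$: one must exhibit a concrete planar $G$ and weighting for which the only double flows contributing are those realizing precisely $M^\ast$, with no spurious circuits and no spurious matchings, and one must verify that for every $A\in\binom{[p+q]}{p}$ the double flow carrying $M^\ast$ exists iff $M^\ast\in\Mscr(A)$. I expect this to be handled by first realizing a single ``reference'' double flow $\xi^\ast$ with $M(\xi^\ast)=M^\ast$ and $d(\xi^\ast)=0$ as a union of $q$ nested paths plus unique free paths inside a minimal planar gadget, then taking $G$ to be exactly the union of the supporting edges of $\xi^\ast$ (a planar acyclic graph by the planarity remark at the end of Section~\ref{sec:flow}), so that $\Phi_{I(A)},\Phi_{J(A)}$ are either empty or single flows and every product $f(A)f(\bar A)$ is $0$ or $1$. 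The extremality of $M^\ast$ is what guarantees no other matching interferes, since in this minimal $G$ a matching $M$ can occur only if it refines $M^\ast$ in the nesting order, and all such strictly larger $M$ cancel. Finally, I would note that rational weights (indeed $0/1$ weights) suffice, so the counterexample lives already over $\Rset$ — in fact over $\Zset_{\ge 0}$ — which is slightly stronger than claimed and immediately yields part (i)$\Rightarrow$(ii) of Theorem~\ref{tm:main} together with the transfer principle stated in the Introduction.
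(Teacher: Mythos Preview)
Your overall strategy --- pick a matching $M$ distinguishing $\Mscr(\Ascr)$ from $\Mscr(\Ascr')$, then build a planar network in which $f(A)f(\bar A)$ equals $1$ when $M\in\Mscr(A)$ and $0$ otherwise, so the two sides of~\refeq{non-balan} become $|\Ascr_M|$ and $|\Ascr'_M|$ --- is exactly the paper's approach. But your execution has a genuine gap and a misconception. The misconception: by~\refeq{nest}(i), every feasible matching for any $A\in\binom{[p+q]}{p}$ has exactly $q$ arcs and $p-q$ free elements. Thus ``maximal number of arcs'' is vacuous, there are no ``strictly larger'' matchings to cancel, and the refinement/cancellation mechanism you sketch is not available. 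The paper accordingly fixes \emph{any} $M$ with $|\Ascr_M|\ne|\Ascr'_M|$ --- no extremality --- and arranges that no other matching can arise at all, so no cancellation is needed.

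The gap is that the construction of $G$ is the whole content of the proof, and you have not given one. Placing a special weight on a single vertex of $\Gamma_{p+q}$ will not isolate one matching among the many that occur there, and ``take $G$ to be the support of a reference double flow $\xi^\ast$'' both presupposes such a $\xi^\ast$ and still leaves you to determine, for every $A$, which $A$- and $\bar A$-flows the support admits. The paper builds $G$ directly from $M$: each arc $\pi=(i,j)\in M$ becomes a half-circle gadget $C_\pi$ with $\Delta(\pi)=(j-i+1)/2$ distinguished $u$-vertices; immediate predecessor/successor gadgets are linked by edges; the $u$-vertices of maximal arcs serve as the sinks $t_1,\ldots,t_p$. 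It then proves by induction on $|M|$ the two key properties: (P1) if $M\in\Mscr(A)$ then $|\Phi_A|=|\Phi_{\bar A}|=1$; (P2) if $M\notin\Mscr(A)$ then $\Phi_A$ or $\Phi_{\bar A}$ is empty (because some arc of $M$ has both ends in one colour class, forcing more sources than available $u$-vertices under the corresponding half-circle). With all weights $1$ this gives $f(A)f(\bar A)\in\{0,1\}$ as required. The case $p>q$ is reduced to $p=q$ by padding $M$ with $p-q$ new arcs to a perfect matching on $[2p]$.
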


  \begin{proof}
~Since $\Ascr,\Ascr'$ are not balanced, there exists a nested matching $M$ in
$[p+q]$ (with $|M|=q$) such that
   \begin{equation} \label{eq:diff_M}
   |\Ascr_M|\ne |\Ascr'_M|,
   \end{equation}
where $\Ascr_M$ denotes the set of members $A\in\Ascr$ having $M$ as a feasible
matching: $M\in\Mscr(A)$, and similarly for $\Ascr'$.

We fix such an $M$, and our aim is to construct a planar network $G=(V,E)$ with
$p+q$ sources that satisfies the following properties:
  \begin{itemize}
\item[(P1)] for each $A\in\binom{[p+q]}{p}$ with $M\in\Mscr(A)$, ~$G$ has a
unique $A$-flow and a unique $\bar A$-flow, i.e. $|\Phi_A|=|\Phi_{\bar A}|=1$;
\item[(P2)] if $A\in\binom{[p+q]}{p}$ and $M\notin\Mscr(A)$, then at least one
of $\Phi_A$ and $\Phi_{\bar A}$ is empty.
  \end{itemize}

Once we are given such a $G$, assign $w(v):=1$ for all $v\in V$. In view
of~(P1) and~(P2), for $f=f_w$ and $A\in\binom{[p+q]}{p}$, we have $f(A)=f(\bar
A)=1$ if $M\in\Mscr(A)$, and $f(A)=f(\bar A)=0$ otherwise (equivalently, the
term $f(A)f(\bar A)$ vanishes in the latter case). This implies
$\sum_{A\in\Ascr} f(A)f(\bar A)=|\Ascr_M|$ and $\sum_{A\in\Ascr'} f(A)f(\bar
A)=|\Ascr'_M|$, and now the required inequality~\refeq{non-balan} follows
from~\refeq{diff_M}.
\smallskip

We first construct the desired network $G$ in case $p=q$. This network is
embedded in the upper half-plane $\Rset\times \Rset_{\ge 0}$ and its sources
$s_i$ are identified with the points $(i,0)$, $i=1,\ldots,2p$. The remaining
part of $G$ is designed as follows.

If two elements (arcs) $\pi,\pi'\in M$ obey $[\pi]\supset[\pi']$ and if there
is no $\pi''\in M$ such that $[\pi]\supset[\pi'']\supset[\pi']$, we say that
$\pi$ is the \emph{immediate predecessor} of $\pi'$ and that $\pi'$ is an
\emph{immediate successor} of $\pi$. An arc in $M$ having no predecessor is
called \emph{maximal} (so the intervals of maximal arcs are pairwise disjoint
and their union is $[2p]$).

Consider an arc $\pi=(i,j)$ and let $\Delta=\Delta(\pi):=(j-i+1)/2$. We
represent $\pi$ by graph $C_\pi$ consisting of $2\Delta+1$ vertices and
$2\Delta$ edges whose union forms the half-circumference $\xi_\pi$ lying in the
upper half-plane, connecting the points $s_i$ and $s_j$ and having the center
at the point $(\frac{i+j}{2},0)$. More precisely, the vertices of $C_\pi$ lie
on $\xi_\pi$ and are labeled as $s_i=v_0,u_1,v_1,u_2,\ldots,
v_{\Delta-1},u_\Delta, v_\Delta=s_j$, in this order from left to right, and the
directed edges (formed by the pieces of $\xi_\pi$ between consecutive vertices)
correspond to the pairs $(v_{\ell-1},u_\ell)$ and $(v_\ell,u_\ell)$ for
$\ell=1,\ldots,\Delta$. To indicate the arc in $M$ generating these vertices,
we also write $u_\ell^\pi$ for $u_\ell$, and $v_\ell^\pi$ for $v_\ell$.

Let $G'$ be the (disjoint) union of graphs $C_\pi$, $\pi\in M$. The desired
network $G$ is obtained from $G'$ by drawing additional edges connecting
subgraphs $C_\pi$ and $C_{\pi'}$ for each non-maximal $\pi'$ and its immediate
predecessor $\pi$. More precisely, let $\pi=(i,j)\in M$ and let
$\pi_\alpha=(i_\alpha,j_\alpha)$, $\alpha=1,\ldots,k$, be the immediate
successors of $\pi$ (possibly $k=0$); we assume that the successors are indexed
from left to right, i.e. $i_\alpha=j_{\alpha-1}+1$ (then $i_1=i+1$ and
$j_k=j-1$). Observe that
   $$
   \sum\nolimits_{\alpha=1}^k \Delta(\pi_\alpha)=\Delta(\pi)-1.
   $$
So the total number of $u$-vertices in the subgraphs $C_{\pi_\alpha}$
($\alpha=1,\ldots,k$) is equal to the number of $v$-vertices in $C_\pi$
different from the endvertices $v_0^\pi=s_i$ and $v_{\Delta(\pi)}^\pi=s_j$.
Moving from left to right (and preserving the planarity), we connect these
vertices by $\Delta(\pi)-1$ edges directed from $u$- to $v$-vertices. Formally,
for $\alpha=1,\ldots,k$ and $\ell=1, \ldots,\Delta(\pi_\alpha)$, we draw edge
$e_{\pi_\alpha,\ell}$ from $u_\ell^{\pi_\alpha}$ to
$v^\pi_{(i_\alpha-i-1)/2+\ell}$.

Finally, let $\pi_1,\ldots,\pi_h$ be the maximal arcs in $M$, in this order
from left to right. Then $\Delta(\pi_1)+\ldots +\Delta(\pi_h)=p$. The
concatenation (from left to right) of the sequences of $u$-vertices in
$C_{\pi_1},\ldots,C_{\pi_h}$ is
  $$
  u_1^{\pi_1},\ldots,u_{\Delta(\pi_1)}^{\pi_1},
   u_1^{\pi_2},\ldots,u_{\Delta(\pi_2)}^{\pi_2}, \ldots,
     u_1^{\pi_h},\ldots,u_{\Delta(\pi_h)}^{\pi_h}.
     $$
We assign these $p$ vertices to be the sinks of $G$, denoted as
$t_1,\ldots,t_p$, respectively. This completes the construction of $G$ with
specified sources and sinks. The picture illustrates an example of $G$; here
$p=5$ and $M=\{16,23,45,(7,10),89\}$.

 \begin{center}
  \unitlength=1mm
  \begin{picture}(120,35)

 \put(0,5){\circle{2}}
 \put(12,5){\circle{2}}
 \put(24,5){\circle{2}}
 \put(36,5){\circle{2}}
 \put(48,5){\circle{2}}
 \put(60,5){\circle{2}}
 \put(72,5){\circle{2}}
 \put(84,5){\circle{2}}
 \put(96,5){\circle{2}}
 \put(108,5){\circle{2}}
  \put(-1,1){$s_1$}
  \put(11,1){$s_2$}
  \put(23,1){$s_3$}
  \put(35,1){$s_4$}
  \put(47,1){$s_5$}
  \put(59,1){$s_6$}
  \put(71,1){$s_7$}
  \put(83,1){$s_8$}
  \put(95,1){$s_9$}
  \put(107,1){$s_{10}$}
  \qbezier(0,5)(30,50)(60,5)
  \qbezier(12,5)(18,18)(24,5)
  \qbezier(36,5)(42,18)(48,5)
  \qbezier(72,5)(90,35)(108,5)
  \qbezier(84,5)(90,18)(96,5)
 \put(18,11){\circle*{1.5}}
 \put(18,24){\circle*{1.5}}
 \put(42,11){\circle*{1.5}}
 \put(42,24){\circle*{1.5}}
 \put(90,11){\circle*{1.5}}
 \put(90,19.5){\circle*{1.5}}
  \put(18,11){\vector(0,1){12.5}}
  \put(42,11){\vector(0,1){12.5}}
  \put(90,11){\vector(0,1){8}}
 \put(8,15.5){\circle*{1.2}}
 \put(30,27.5){\circle*{1.2}}
 \put(52,15.5){\circle*{1.2}}
 \put(78,12.8){\circle*{1.2}}
 \put(102,12.8){\circle*{1.2}}
 \put(8,15.5){\circle{2.4}}
 \put(30,27.5){\circle{2.4}}
 \put(52,15.5){\circle{2.4}}
 \put(78,12.8){\circle{2.4}}
 \put(102,12.8){\circle{2.4}}
 \put(4,17){$t_1$}
 \put(29,30){$t_2$}
 \put(54,17){$t_3$}
 \put(74,15){$t_4$}
 \put(103,15){$t_5$}
   \end{picture}
   \end{center}

 \noindent
\textbf{Remark 3.} ~The constructed $G$ is a forest whose connected components
(trees) correspond to maximal arcs in $M$. One can make it connected, e.g., by
adding extra nodes $z_i$ and edges $(z_i,s_i),(z_i,s_{i+1})$ for $i=1,\ldots,
2p-1$. Then the sources and sinks become lying in the boundary of the outer
face and go there in the order $s_{2p},\ldots, s_1,t_1,\ldots, t_p$. In fact,
it does not matter that our network has $p$ (rather than $2p$) sinks; however,
to be consistent with settings in Section~\ref{sec:intr}, we can slightly
modify the network so as to add sinks $t_{p+1},\ldots,t_{2p}$ in a due way.
\medskip

We assert that $G$ satisfies properties~(P1) and~(P2) for the given $M$. To
show~(P1), we use induction on $|M|$. Consider $A\in\binom{[2p]}{p}$ such that
$M\in\Mscr(A)$. Let $\pi=(1,r)$ be the first maximal arc in $M$, and let
$M':=M-\{\pi\}$. By our construction, the corresponding network $G'$ for $M'$
is obtained from $G$ by removing the subgraph $C_\pi$ (and the edges connecting
it with the rest of $G$). It has $p-1$ sinks $t'_1,\ldots,t'_{p-1}$, of which
the last $p-\Delta(\pi)$ sinks are sinks of $G$ (namely, $t'_{\ell}=t_{\ell+1}$
for $\ell=p-\Delta(\pi),\ldots,p-1$), and the first $\Delta(\pi)-1$ sinks
$t'_1,\ldots,t'_{\Delta(\pi)-1}$ are the $u$-vertices in the subgraphs
$C_{\pi'}$ generated by the immediate successors $\pi'$ of $\pi$ in $M$.

The $(p-1)$-element sets $A':=A-\{1,r\}$ and $\tilde A':=\bar A-\{1,r\}$ give a
partition of the set $[2p]-\{1,r\}$ (whose elements can be renumbered as
$1',2',\ldots$ if wished), and both have $M'$ as a feasible matching. By
induction $G'$ has a unique $A'$-flow $F'$ and a unique $\tilde A'$-flow
$\tilde F'$. Observe that $G$ has a unique collection $\Pscr$ of $\Delta(\pi)$
pairwise disjoint directed paths going, respectively, from the vertices
$s_1,t'_1,\ldots,t'_{\Delta(\pi)-1}$ to the vertices
$t_1,\ldots,t_{\Delta(\pi)}$ (namely, the path from $s_1$ to $t_1$ consists of
one edge, and the path from $t'_\ell$ to $t_{\ell+1}$ goes through the vertex
$v_\ell$). Similarly, $G$ has a unique collection $\Qscr$ of pairwise disjoint
directed paths going, respectively, from $t'_1,\ldots,t'_{\Delta(\pi)-1},s_r$
to $t_1,\ldots,t_{\Delta(\pi)}$. Now the (unique) $A$-flow $\Fscr$ and $\bar
A$-flow $\bar\Fscr$, as required in~(P1) for $G$, are obtained by combining
$\Fscr',\tilde \Fscr'$ with $\Pscr,\Qscr$ in a natural way. (In case
$A=A'\cup\{1\}$, $\Fscr$ is formed from $\Fscr',\Pscr$, and $\bar \Fscr$ from
$\tilde \Fscr',\Qscr$. In case $A=A'\cup\{r\}$, $\Fscr$ is formed from
$\Fscr',\Qscr$, and $\bar\Fscr$ from $\tilde \Fscr',\Pscr$.)

Next we show~(P2). Let $A\in\binom{[2p]}{p}$ and $M\not\in\Mscr(A)$. Then there
exists $\pi=(i,j)\in M$ such that one of $A,\bar A$ contains both $i,j$. Choose
$\pi$ with $j-i$ minimum under this property. Assume that $i,j\in A$. Since
each arc $\pi'\in M$ with $[\pi']\subset[\pi]$ has exactly one end in $A$ and
since $i,j\in A$, we observe that the interval $[\pi]$ contains $\Delta(\pi)+1$
elements of $A$. On the other hand, the number of $u$-vertices in the subgraph
$C_\pi$ is only $\Delta(\pi)$ (and removing these vertices from $G$ disconnects
the sources $s_k$ with $i\le k\le j$ from the sinks). Hence no $A$-flow in $G$
can exist. Similarly, when $i,j\in\bar A$, the network $G$ has no $\bar
A$-flow. This yields~(P2).

It remains to consider the case $p>q$. We reduce it to the previous case as
follows. Given a nested matching $M$ (of size $q$) in $[p+q]$
satisfying~\refeq{diff_M}, let $i_1<\ldots<i_{p-q}$ be the sequence of free
elements for $M$. We involve these elements in $p-q$ new arcs
$\pi_\ell=(i_\ell,2p-\ell+1)$, $\ell=1,\ldots,p-q$, which are added to $M$. One
can see that the resulting arc set $\hat M$ is a correct nested matching of
size $p$ in $[2p]$. Accordingly, for each $A\in\binom{[p+q]}{p}$, the partition
$(A,\bar A)$ of $[p+q]$ is associated with the partition $(A,\hat A)$ of
$[2p]$, where $\hat A:=\bar A\cup\{p+q+1,\ldots,2p\}$. Let $G$ be the network
constructed as above for the matching $\hat M$ in $[2p]$, and let
$s_1,\ldots,s_{2p}$ and $t_1,\ldots,t_p$ be the sequences of sources and sinks
in it, respectively. It is not difficult to check that $G$ satisfies the
required properties~(P1) and~(P2) for the initial matching $M$ as well. (If
wished, one can remove the sources $s_{p+q+1},\ldots,s_{2p}$ from $G$.)

This completes the proof of the proposition.
  \end{proof}


\section{\Large The standard basis and Laurent phenomenon}  \label{sec:laurent}

In this section we assume that $\frakS$ is a commutative semiring with
division, i.e. $\frakS$ contains $\underline 1$ and the operation $\odot$ is
invertible (in other words, $(\frakS,\odot)$ is an abelian group). Two
important special cases, mentioned in the Introduction, are: the set
$\Rset_{>}$ of positive reals; the tropicalization $\frakL^{\rm trop}$ of a
totally ordered abelian group $\frakL$, in particular, the set $\Rset_{\max}$
of reals with operations $\oplus=\max$ and $\odot=+$. In these special cases
the corresponding sets $\bf{FG}$ of flow-generated functions on $2^{[n]}$
(namely, $\bf{FG}_n(\Rset_{>})$ and $\bf{FG}_n(\frakL^{\rm trop})$) possess the
following nice properties: (i) all these functions $f$ can be generated by
flows in one planar network, namely, in the half-grid $\Gamma=\Gamma_n$; (ii)
$\bf{FG}$ has as a basis the set $\Iscr_n$ of intervals in $[n]$ (called the
\emph{standard} basis for $\bf{FG}$), and (iii) the values of $f$ are expressed
as (algebraic or tropical) Laurent polynomials in its values on $\Iscr_n$.
These facts are discussed in~\cite{DKK1} (mostly for $\Rset_{\max}$) and
in~\cite{BFZ,FZ} (concerning~(iii)); in essence, the arguments can be directly
extended to an arbitrary $\frakS$ as above. Below we give a brief outline
(which is sufficient to restore the details with help of~\cite{DKK1}).

An important feature of this $\Gamma=(V,E)$ is that for any nonempty interval
$I=[q..r]$ in $[n]$, there exists exactly one feasible flow $\phi_I$ from $S_I$
to the sinks $t_1,\ldots,t_{|I|}$; namely, $\phi_I$ goes through the vertices
$(i,j)$ occurring in the rectangle $[r]\times[r-q+1]$ (more precisely,
satisfying $i\le r$, $j\le r-q+1$ and $i\ge j$). Therefore, given a weighting
$w:V\to \frakS$, the values of $f=f_w$ on the nonempty intervals $[q..r]$ are
viewed as
  \begin{equation} \label{eq:fpq}
  f[q..r]=\bigodot_{j\le i\le r,\; 1\le j\le r-q+1} w(i,j).
  \end{equation}

Note that the number $\frac{n(n+1)}{2}$ of vertices of $\Gamma$ is equal to the
number of nonempty intervals in $[n]$ and the system~\refeq{fpq} is
non-degenerate. So, using division in $\frakS$, denoted as $\odivide$, we can
in turn express the weights of vertices via the values of $f$ on the intervals.
This is computed as
   \begin{equation}\label{eq:wij}
   w(i,j)= \left\{
    \begin{array}{ll}
  (f(I_{i,j})\odot f(I_{i-1,j-1}))\odivide (f(I_{i-1,j})\odot f(I_{i,j-1})) & \mbox{for $i>j$},\\
   f(I_{i,j})\odivide f(I_{i,j-1}) & \mbox{for $i=j$},
  \end{array}
        \right.
   \end{equation}
denoting by $I_{i',j'}$ the interval $[(i'-j'+1).. i']$ and letting
$f(I_{i',0}):=\underline{1}$.

Thus, the correspondence $w\mapsto f_w$ gives a bijection between the set of
weightings $w:V\to\frakS$ and $\frakS^{\Iscr_n^0}$, where $\Iscr_n^0$ is the
set of nonempty intervals in $[n]$. By definition~\refeq{S_f}, the value of
$f=f_w$ on any nonempty subset $A\subseteq [n]$ is represented by a
``polynomial'' in variables $w(v)$, $v\in V$, namely, by a $\oplus$-sum of
products $\odot(w(v)\colon v\in V')$ for some subsets $V'\subseteq V$.
Substituting into this polynomial the corresponding terms from~\refeq{wij}, we
obtain an expression of the form
   $$
   f(A)=\oplus\left( \Pscr_k\colon k=1,\ldots,N\right),
   $$
where each $\Pscr_k$ is a ``monomial'' $\odot(f(I)^{\sigma_k(I)} \colon
I\in\Iscr_n^0)$ with integer (possibly negative) degrees $\sigma_k(I)$. This
means that $f(A)$ is a Laurent polynomial (regarding addition $\oplus$ and
multiplication $\odot$) in variables $f(I)$, $I\in\Iscr_n^0$. \medskip

\noindent\textbf{Remark 4.} ~Analyzing possible flows in $\Gamma$, one can show
that the degrees $\sigma_k(I)$ are bounded and, moreover, belong to
$\{-1,0,1,2\}$. This is proved in~\cite{DKK1} for the tropical case and can be
straightforwardly extended to an arbitrary commutative semiring $\frakS$ with
division. (In~\cite{Sp} similar bounds are established for Laurent polynomials
arising in the octahedron recurrence.) \medskip

Finally, a simple fact (see~\cite{DKK1}) is that any function $f:2^{[n]}\to
\Rset$ obeying TP3-relation~\refeq{TP3} is determined by its values on
$\Iscr_n$. The proof of this fact is directly extended to $\frakS$ in question.
(A hint: if $S\subseteq[n]$ is not an interval, define $i:=\min(S)$,
$k:=\max(S)$, $X:=S-\{i,k\}$, and let $j$ be an element in $[i..k]-S$. Then for
a function $f$ on $2^{[n]}$ obeying SP3-relation~\refeq{SP3}, the value $f(S)$
is expressed via the values $f(S')$ on five sets $S'=Xi,Xj,Xk,Xij,Xjk$. Since
$\max(S')-\min(S')< \max(S)-\min(S)$, we can apply induction on
$\max(S)-\min(S)$.) This fact together with reasonings above implies that
$\Iscr_n$ is a basis for the functions in $\bf{FG}_n(\frakS)$ and that all
these functions are generated by flows in $\Gamma$ (so they are bijective to
weightings $w:V\to\frakS$, up to their values on $\emptyset$, and possess the
Laurentness property as above).

\end{document}